\theoremstyle{plain}
\newtheorem{theorem}{Theorem}
\newtheorem{lemma}{Lemma}
\newtheorem{proposition}{Proposition}
\theoremstyle{definition}
\theoremstyle{remark}
\numberwithin{equation}{section}
\begin{document}

\title[Large components in random induced 
       subgraphs of n-cubes]
      {Large components in random induced 
       subgraphs of n-cubes}
\author{Christian M. Reidys}
\address{Center for Combinatorics, LPMC-TJKLC \\
         Nankai University  \\
         Tianjin 300071\\
         P.R.~China\\      
         Phone: *86-22-2350-5133-6800\\
         Fax:   *86-22-2350-9272}
\email{reidys@nankai.edu.cn}
\thanks{}
\keywords{random graph, $n$-cube, giant component, vertex boundary}
\date{January, 2008}
\begin{abstract}
In this paper we study random induced subgraphs of the binary
$n$-cube, $Q_2^n$. This random graph is obtained by selecting each 
$Q_2^n$-vertex with independent probability $\lambda_n$. Using a novel 
construction of subcomponents we study the largest component for 
$\lambda_n=\frac{1+\chi_n}{n}$, where $\epsilon\ge \chi_n\ge n^{-\frac{1}{3}+
\delta}$, $\delta>0$.
We prove that there exists a.s.~a unique largest component $C_n^{(1)}$.
We furthermore show that $\chi_n=\epsilon$,
$\vert C_n^{(1)}\vert\sim \alpha(\epsilon)\,\frac{1+\chi_n}{n}\, 2^n$ 
and for $o(1)=\chi_n\ge n^{-\frac{1}{3}+\delta}$, $\vert C_n^{(1)}\vert
\sim 2\,\chi_n\,\frac{1+\chi_n}{n}\, 2^n$ holds.
This improves the result of 
\cite{Bollobas:91} where constant $\chi_n=\chi$ is considered. 
In particular, in case of $\lambda_n=\frac{1+\epsilon}
{n}$, our analysis implies that a.s.~a unique giant 
component exists. 
\end{abstract} 
\maketitle
{{\small
}}

\section{Introduction}


\subsection{Background}
Burtin was the first \cite{Burtin:77} to study the connectedness of
random subgraphs of $n$-cubes, $Q_2^n$, obtained by selecting all 
$Q_2^n$-{\rm edges} independently (with probability $p_n$). 
He proved that a.s.~all such subgraphs are connected for $p>1/2$ and
are disconnected for $p<1/2$. Erd\H{o}s and Spencer \cite{Erdoes:79} 
refined Burtin's result and, more importantly in our context, they 
conjectured that there exists a.s.~a giant component for 
$p_n=\frac{1+\epsilon}{n}$ and $\epsilon>0$. Their conjecture 
was proved by Ajtai, Koml\'os and Szemer\'edi \cite{Ajtai:82}
who established the existence of a giant 
component for $p_n=\frac{1+\epsilon}{n}$. Key ingredients in their
proof are Harper's isoperimetric inequality \cite{Harper:66b} and a two 
round randomization, used for showing the non existence of certain splits. 
Several variations including the analysis of the giant component in random 
graphs with given average degree sequence have been studied 
\cite{Aizenman:87,Mesh:86,Molloy:98}.
Bollob\'as, Kohayakawa and Luczak \cite{Bollobas:92} analyzed the
behavior for $\epsilon$ tending to $0$ and showed in particular that the
constant for the giant component for fixed $\epsilon>0$ coincides with the
probability of infinite survival of the associated Poisson branching 
process. Spencer {\it et al.} \cite{Spencer:03} refined their results,
using specific properties of the $n$-cube as for instance the isoperimetric 
inequality \cite{Harper:66b} and Ajtai {\it et al.}'s two round 
randomization idea.
Considerably less is known for random induced subgraphs of the $n$-cube
obtained by independently selecting each $Q_2^n$-vertex with probability
$\lambda_n$. The main result here is the paper of Bollob\'as {\it et.al.} 
who have shown in \cite{Bollobas:91} for constant $\chi$ that 
$C_n^{(1)}=(1+o(1))\kappa \chi \frac{1+\chi}{n} 2^n$. In this paper we
improve this result. We show that for $\chi_n\ge n^{-\frac{1}{3}+\delta}$,
where $\delta>0$ a unique largest component exists and determine its size.
The key observation is a novel construction for small subcomponents
given in Lemma~\ref{L:poly}.

Random induced subgraphs arise in the context of molecular folding maps 
\cite{Schuster:94} where the neutral networks of molecular structures 
can be modeled as random induced subgraphs of $n$-cubes \cite{Reidys:97a}. 
They also occur in the context of neutral evolution of 
populations (i.e.~families of $Q_2^n$-vertices) consisting of erroneously 
replicating bit strings. Here, we work of course in $Q_4^n$, since we have
the alphabet $\{{\bf A},{\bf U},{\bf G},{\bf C}\}$.
Random induced subgraphs of $n$-cubes have had impact on conceptual level 
\cite{Schuster:02} and led to experimental work identifying sequences that 
realize two distinct ribozymes \cite{Schultes:00}.
A systematic computational analysis of neutral networks of molecular folding 
maps can be found in \cite{Gruener:95a}. The main result of this paper is 
the following

{\bf Theorem.}{\it 
$\,$
Let $Q_{2,\lambda_n}^n$ be the random graph consisting of $Q_2^n$-subgraphs,
$\Gamma_n$, induced by selecting each $Q_2^n$-vertex with independent 
probability $\lambda_n=\frac{1+\chi_n}{n}$, where $\epsilon\ge 
\chi_n\ge n^{-\frac{1}{3}+\delta}$, $\epsilon,\delta>0$. Then we have
\begin{eqnarray}
\lim_{n\to\infty}\mathbb{P}(\,  
\vert C_n^{(1)}\vert \sim \alpha(\epsilon)\,\frac{1+\epsilon}{n}\,2^n
\ \text{\it and $C_n^{(1)}$ is unique}\, )= 1 
\end{eqnarray}
and for $o(1)=\chi_n\ge n^{-\frac{1}{3}+\delta}$
\begin{eqnarray}
\lim_{n\to\infty}\mathbb{P}(\,  
\vert C_n^{(1)}\vert \sim 2\,\chi_n\,\frac{1+\chi_n}{n}\,2^n
\ \text{\it and $C_n^{(1)}$ is unique}\, )= 1. 
\end{eqnarray}
}
For $\chi_n=\epsilon$ the above theorem (combined with a straightforward 
argument for $\lambda_n\le\frac{1-\epsilon}{n}$) implies 
\begin{equation}
\lim_{n\to\infty}\mathbb{P}(\Gamma_n \,\text{\it has an unique giant 
component})=
\begin{cases}
1 & \text{\rm for $\lambda_n\ge\frac{1+\epsilon}{n}$ }\\
0 & \text{\rm for $\lambda_n\le\frac{1-\epsilon}{n}$ } \ .
\end{cases} 
\end{equation}
This is the random induced subgraph analogue of Ajtai {\it et al.}'s 
\cite{Ajtai:82} result. We present in Lemma~\ref{L:poly} a novel 
construction of subcomponents using branching processes inductively. 
We prove the main result using a generic vertex-boundary result due 
to Aldous \cite{Aldous:87,Babai:91b}. All results proved in this paper
remain valid for $n$-cubes over arbitrary, finite alphabets.

\subsection{Notation and terminology}
The binary $n$-cube, $Q_2^n$, is a combinatorial graph with vertex set
$\mathbb{F}_2^n$ in which two vertices are adjacent if they differ in 
exactly one coordinate. Let $d(v,v')$ be the number of 
coordinates by which $v$ and $v'$ differ. We set
\begin{eqnarray}\label{E:ball}
\forall \, A\subset \mathbb{F}_2^n,\, j\le n;\quad 
\text{\sf B}(A,j) & = & \{v\in\mathbb{F}_2^n\mid \exists \,\alpha\in A;\,
d(v,\alpha)\le j\} \\
\text{\sf S}(A,j) & = & \{v\in\mathbb{F}_2^n\mid \exists \,\alpha\in A;\,
d(v,\alpha)= j\} \\
\forall\, A\subset \mathbb{F}_2^n;\qquad 
\text{\sf d}(A) & = &\{v\in\mathbb{F}_2^n\mid \exists \, \alpha\in A;\, 
d(v,\alpha)=1\} 
\end{eqnarray}
and call $\text{\sf B}(A,j)$ and $\text{\sf d}(A)$ the ball of radius $j$
around $A$ and the vertex boundary of $A$ in $Q_2^n$, respectively.
If $A=\{\alpha\}$ we simply write $\text{\sf B}(\alpha,j)$.
Let $A,B\subset \mathbb{F}_2^n$, we call $A$ $\ell$-dense in $B$ if
$\text{\sf B}(v,\ell)\cap A\neq\varnothing$ for $v\in B$.
$Q_2^n$ can be viewed as the Cayley graph 
$\text{\sf Cay}(\mathbb{F}_2^n,\{e_i\mid i=1,\dots, n\})$ where $e_i$
is the canonical base vector. We will view $\mathbb{F}_2^n$ as a 
$\mathbb{F}_2$-vectorspace and denote the linear hull over 
$\{v_1,\dots,v_h\}$, $v_j\in\mathbb{F}_2^n$ by 
$\langle v_1,v_2,\dots,v_h\rangle$. 
There exists a natural linear order $\le$ over $Q_2^n$ given by
\begin{equation}\label{E:order}
v \le v'  \quad \Longleftrightarrow \quad 
(d(v,0) <d(v',0)) \ \vee \ 
(d(v,0)=d(v',0) \ \wedge \ v
<_{\text{\rm lex}} v' )  \ ,
\end{equation}
where $<_{\text{\rm lex}}$ denotes the lexicographical order. Any notion 
of minimal element or smallest element in $A\subset Q_2^n$ is considered
w.r.t.~the linear order $\le$ of eq.~(\ref{E:order}).

Each $A\subset \mathbb{F}_2^n$ induces a unique induced subgraph in $Q_2^n$, 
denoted by $Q_2^n[A]$, in which $a_1,a_2\in A$ are adjacent iff 
$a_1,a_2$ are adjacent in $Q_2^n$. 
Let $Q_{2,\lambda_n}^n$ be the random graph consisting of 
$Q_2^n$-subgraphs, $\Gamma_n$, induced by selecting each $Q_2^n$-vertex 
with independent probability $\lambda_n$. 
That is, $Q_{2,\lambda_n}^n$ is the finite probability space 
$(\{Q_2^n[A]\mid A\subset \mathbb{F}_2^n\},\mathbb{P})$, 
with the probability measure 
$\mathbb{P}(A)=\lambda_n^{\vert A\vert}\,
(1-\lambda_n)^{2^n-\vert A\vert}$. A property $\text{\sf M}$ is a 
subset of induced subgraphs of $Q_2^n$ closed 
under graph isomorphisms. The terminology ``$\text{\sf M}$ holds 
a.s.'' is equivalent to $\lim_{n\to\infty}\mathbb{P}(\text{\sf M})=1$.
A component of $\Gamma_n$ is a maximal connected induced 
$\Gamma_n$-subgraph, $C_n$. 
The largest $\Gamma_n$-component is denoted by $C_n^{(1)}$. 
It is called a giant component if and only if
\begin{equation}
\exists \, \kappa>0,\quad \vert C_n^{(1)}
\vert \ge \kappa\, \vert \Gamma_n\vert \ ,
\end{equation}
and $x_n\sim y_n$ is equivalent to (a) $\lim_{n\to\infty}x_n/y_n$ exists
and (b) $\lim_{n\to\infty}x_n/y_n=1$.
Let $Z_n=\sum_{i=1}^n \xi_i$ be a sum of mutually independent indicator 
random variables (r.v.), $\xi_i$ having values in $\{0,1\}$. Then we have, 
\cite{Chernoff:52}, for $\eta>0$ and 
$c_\eta=\min\{-\ln(e^{\eta}[1+\eta]^{-[1+\eta]}), \frac{\eta^2}{2}\}$ 
\begin{equation}\label{E:cher}
\text{\sf Prob}(\,\vert\,Z_n-\mathbb{E}[Z_n]\,\vert\,>
\eta\,\mathbb{E}[Z_n]\,) \le 
        2 e^{-c_\eta \mathbb{E}[Z_n]}\, .
\end{equation}
$n$ is always assumed to be sufficiently large and $\epsilon$ is a
positive constant satisfying $0<\epsilon < \frac{1}{3}$. We use the 
notation $B_m(\ell,\lambda_n)=\binom{m}{\ell}\lambda_n^\ell\,
(1-\lambda_n)^{m-\ell}$ and write 
$g(n)=O(f(n))$ and $g(n)=o(f(n))$ for $g(n)/f(n)\to \kappa$ as
$n\to \infty$ and $g(n)/f(n)\to 0$ as $n\to \infty$, respectively.

\section{Preliminaries}
Let us briefly recall some basic facts about branching processes 
\cite{Harris:63,Kolchin:86}. Suppose $\xi$ is a random variable and 
$(\xi_i^{(t)})$, $i,t\in\mathbb{N}$ counts the number of offspring 
of the $i$th-individual at generation $t-1$. 
We consider the family of r.v.~$(Z_i)_{i\in \mathbb{N}_0}$: $Z_0=1$ and 
$Z_{t} = \sum_{i=1}^{Z_{t-1}}\xi_i^{(t)}$ for $t\ge 1$ and
interpret $Z_t$ as the number of individuals ``alive'' in generation 
$t$. We will be interested in the limit probability
$\lim_{t\to\infty}\text{\sf Prob}(Z_t>0)$, i.e.~the probability of 
infinite survival. We have
\begin{theorem}\label{T:galton}
Let $u_n=n^{-\frac{1}{3}}$, $\lambda_n=\frac{1+\chi_n}{n}$,
$m=n-\lfloor \frac{3}{4}u_n n\rfloor$ and $\text{\sf Prob}(\xi=\ell)=
B_m(\ell,\lambda_n)$. Then for $\chi_n=\epsilon$ the r.v.~$\xi$ becomes 
asymptotically Poisson, i.e.~ $\mathbb{P}(\xi=\ell) \sim 
\frac{(1+\epsilon)^\ell}{\ell!}\, 
e^{-(1+\epsilon)}$ and 
\begin{equation}
0<\lim_{t\to\infty}\text{\sf Prob}(Z_t>0)=\alpha(\epsilon)<1 \ .
\end{equation}
For $o(1)=\chi_n\ge n^{-\frac{1}{3}+\delta}$, $\delta>0$ we have
\begin{equation}
\lim_{t\to\infty}\text{\sf Prob}(Z_t>0)=(2+o(1))\,\chi_n \ .
\end{equation}
\end{theorem}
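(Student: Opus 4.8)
The plan is to analyze the offspring distribution $\xi$ with $\mathbb{P}(\xi=\ell)=B_m(\ell,\lambda_n)$ via its mean and then apply standard branching-process extinction theory, treating the two regimes of $\chi_n$ separately. First I would compute the mean offspring number $\mu_n=\mathbb{E}[\xi]=m\lambda_n$. Since $m=n-\lfloor\frac34 u_nn\rfloor = n(1-\frac34 u_n+O(n^{-1}))$ with $u_n=n^{-1/3}$, we get $\mu_n=(1+\chi_n)(1-\frac34 u_n+o(u_n))=1+\chi_n-\frac34 n^{-1/3}+o(n^{-1/3})$. Here the hypothesis $\chi_n\ge n^{-1/3+\delta}$ is exactly what guarantees $\chi_n$ dominates the $\frac34 n^{-1/3}$ correction, so $\mu_n=1+\chi_n(1+o(1))>1$ for large $n$; thus the process is supercritical and $\lim_{t\to\infty}\mathrm{Prob}(Z_t>0)>0$ in both regimes. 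For the regime $\chi_n=\epsilon$, I would show $B_m(\ell,\lambda_n)\to \frac{(1+\epsilon)^\ell}{\ell!}e^{-(1+\epsilon)}$ by the usual Poisson-limit computation: $m\lambda_n\to 1+\epsilon$, $\binom{m}{\ell}\lambda_n^\ell = \frac{(m\lambda_n)^\ell}{\ell!}(1+o(1))$ for fixed $\ell$, and $(1-\lambda_n)^{m-\ell}\to e^{-(1+\epsilon)}$; one must also check uniform control (e.g.\ via a dominating summable bound using the Chernoff inequality \eqref{E:cher}) so that the generating functions converge and the extinction probabilities converge accordingly.

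Next, for the value of the survival probability: let $f_n(s)=\mathbb{E}[s^\xi]=(1-\lambda_n+\lambda_n s)^m$ be the probability generating function, and recall that the extinction probability $q_n$ is the smallest fixed point of $f_n$ in $[0,1]$, so $\mathrm{Prob}(Z_\infty>0)=1-q_n$. In the Poisson case the limiting pgf is $f(s)=e^{(1+\epsilon)(s-1)}$, whose smallest fixed point $q(\epsilon)\in(0,1)$ is well-defined since $1+\epsilon>1$; I would set $\alpha(\epsilon)=1-q(\epsilon)$ and argue $1-q_n\to\alpha(\epsilon)$ by continuity of the smallest-fixed-point map with respect to the (locally uniform) convergence $f_n\to f$. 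For the regime $o(1)=\chi_n\ge n^{-1/3+\delta}$, I would do a near-critical expansion: write $\beta_n=1-q_n$ (small), expand $q_n=f_n(q_n)$ around $s=1$ using $f_n(1)=1$, $f_n'(1)=m\lambda_n=\mu_n$, $f_n''(1)=m(m-1)\lambda_n^2=\mu_n^2(1-\tfrac1m)\to 1$. The fixed-point equation $1-\beta_n=f_n(1-\beta_n)$ gives, to second order, $\beta_n=\mu_n\beta_n-\tfrac12 f_n''(1)\beta_n^2+O(\beta_n^3)$, hence $\beta_n(\mu_n-1)=\tfrac12\beta_n^2(1+o(1))$, so $\beta_n=2(\mu_n-1)(1+o(1))=2\chi_n(1+o(1))$, using again that $\mu_n-1=\chi_n(1+o(1))$ because $\chi_n\gg n^{-1/3}$. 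Controlling the cubic remainder is legitimate here precisely because $\beta_n\to 0$.

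The main obstacle I anticipate is the near-critical Taylor expansion in the second regime: one must justify that the $O(\beta_n^3)$ (and higher) terms in the fixed-point equation are genuinely negligible compared to $\beta_n^2$ \emph{uniformly} as $n\to\infty$, even though the offspring distribution is changing with $n$ and only becomes Poisson-like in the limit. This requires a quantitative bound on $f_n'''$ on $[q_n,1]$ — e.g.\ $f_n'''(s)\le f_n'''(1)=m(m-1)(m-2)\lambda_n^3\to 1$ — combined with the a priori fact that $q_n\to 1$, which itself follows from $\mu_n\to 1$ via monotonicity of the smallest fixed point in the mean. A clean way to package all of this is to compare with the Poisson($\mu_n$) branching process, whose survival probability $\rho(\mu_n)$ satisfies $\rho(\mu)=2(\mu-1)+O((\mu-1)^2)$ by the classical expansion, and then bound $|q_n-e^{-\mu_n(1-q_n)}|$ to transfer the estimate; the error introduced by replacing $B_m(\cdot,\lambda_n)$ with Poisson($\mu_n$) is $O(\lambda_n)=O(1/n)=o(\chi_n)$, so it does not affect the leading asymptotics. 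The first regime is comparatively routine, the only care needed being the uniform integrability/domination needed to pass convergence of pgf's through to convergence of the smallest fixed point.
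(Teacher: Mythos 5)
The paper does not give a proof of Theorem~\ref{T:galton}; it is presented as a recall of standard branching-process facts, with the supporting citations being Harris, Kolchin, and (for eq.~(\ref{E:ll})) Corollary~6 of Bollob\'as--Kohayakawa--{\L}uczak. Your proposal correctly fills in the omitted argument, and the core of it — computing $\mu_n=m\lambda_n=1+\chi_n(1+o(1))$ (using $\chi_n\ge n^{-1/3+\delta}$ to swallow the $\tfrac34 n^{-1/3}$ correction), then reading the survival probability off the pgf fixed-point equation, with a direct Poisson limit in the $\chi_n=\epsilon$ case and a near-critical Taylor expansion of $f_n(s)=(1-\lambda_n+\lambda_ns)^m$ about $s=1$ in the $\chi_n=o(1)$ case — is exactly the right argument and is sound. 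You also correctly flag that $\beta_n=1-q_n\to 0$ (needed to discard the cubic remainder) follows from $\mu_n\to 1$, and that $f_n'''$ is monotone on $[0,1]$ so $f_n'''(\zeta)\le f_n'''(1)\to 1$.

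One caveat on your ``clean packaging'' via comparison with the Poisson($\mu_n$) process: the $O(\lambda_n)=O(1/n)$ bound is a bound on the sup-norm distance of the pgfs (or on total variation of the offspring laws), not directly on the extinction probabilities. Near criticality the fixed-point map is only Lipschitz with constant $\sim 1/|f_n'(q_n)-1|\asymp 1/\chi_n$, so a perturbation of order $1/n$ in the pgf moves the extinction probability by $O(1/(n\chi_n))$, not $O(1/n)$. This is still $o(\chi_n)$ precisely because $\chi_n\ge n^{-1/3+\delta}\gg n^{-1/2}$, so the comparison does close — but the justification requires the extra factor of $1/\chi_n$ that your write-up omits. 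Since your direct expansion of $f_n$ already gives the result without any comparison to the Poisson process, I would simply drop that last paragraph and rely on the direct route.
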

In view of Theorem~\ref{T:galton} we introduce the notation
\begin{eqnarray}
\pi(\chi_n) & = & 
\begin{cases}
\alpha(\epsilon)            & \quad \text{\rm for}\quad \chi_n=\epsilon \\
2(1+o(1))\chi_n  & \quad \text{\rm for}\quad   \ o(1)=\chi_n\ge 
                      n^{-\frac{1}{3}+\delta} \ .
\end{cases}
\end{eqnarray}
We procceed by labeling the indices of a $Q_2^n$-vertex 
$v=(x_1,\dots,x_n)$. For this purpose set
\begin{equation}\label{E:nu}
\nu_n  =  \lfloor \frac{1}{2k(k+1)}u_nn\rfloor,\quad
\iota_n  =  \lfloor\frac{k}{2k+1}u_nn\rfloor,\quad\text{\rm and}\quad
z_n=k\nu_n+\iota_n \ .
\end{equation} 
We write an $Q_2^n$-vertex $v=(x_1,\dots,x_n)$ as
\begin{equation}\label{E:not-seq}
   (\underbrace{x_1^{(1)},\dots,x^{(1)}_{\nu_n}}_{\nu_n\, {\rm coordinates}},
   \underbrace{x_1^{(2)},\dots,x^{(2)}_{\nu_n}}_{\nu_n\, {\rm coordinates}},
    \dots,
 \underbrace{x_1^{(k+1)},\dots,x^{(k+1)}_{\iota_n}}_{
     \iota_n\, {\rm coordinates}},
   \underbrace{x_{u_n +1},\dots,x_{n}}_{n-z_n
        \ge \atop n-\lfloor \frac{1}{2}u_n n\rfloor \, {\rm coordinates}}) \  .
\end{equation}
For any $1\le s\le \nu_n$, $r=1,\dots,k$ we set $e_s^{(r)}$ to be
the $s + (r-1)\nu_n$th-unit vector, i.e. $e_s^{(r)}$ has exactly one $1$ at 
its $(s+ (r-1)\nu_n)$th coordinate. Similarly let $e_s^{(k+1)}$, 
$1\le s\le \iota_n$ denote the $(s + k\nu_n)$th-unit vector.
We use the standard notation for the $z_n +1\le t\le n$ 
unit vectors, i.e. $e_t$ is the vector where $x_t=1$ and $x_j=0$, otherwise.

In our first lemma we use Theorem~\ref{T:galton} in order to obtain 
information about small components in $\Gamma_n$.
\begin{lemma}\label{L:G}
Suppose $\lambda_n=\frac{1+\chi_n}{n}$ and $\epsilon\ge \chi_n\ge 
n^{-\frac{1}{3}+\delta}$, where $\delta>0$. 
Then each $\Gamma_n$-vertex is contained in a $\Gamma_n$-subcomponent 
of size $\lfloor \frac{1}{4}u_n n \rfloor$ with probability at least 
$\pi(\chi_n)$.
\end{lemma}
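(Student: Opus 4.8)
The plan is to expose vertices of $\Gamma_n$ in waves, building around a fixed vertex $v$ a subcomponent whose growth is dominated from below by the branching process $(Z_t)$ of Theorem~\ref{T:galton}, and to stop once we have accumulated $\lfloor \tfrac14 u_n n\rfloor$ vertices. First I would fix a vertex $v$ and condition on $v\in\Gamma_n$; by vertex-transitivity of $Q_2^n$ we may take $v=0$. The key device is the coordinate labeling of eq.~(\ref{E:not-seq})–(\ref{E:nu}): I want to explore along edges in a way that never revisits coordinates, so that the offspring distribution at every step is a genuine binomial $B_m(\cdot,\lambda_n)$ with $m\ge n-\lfloor\tfrac12 u_n n\rfloor$, i.e.\ dominates the process $\xi$ with $m=n-\lfloor\tfrac34 u_n n\rfloor$ used in Theorem~\ref{T:galton}. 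Concretely, I would run a breadth-first exploration: generation $0$ is $\{v\}$; having built generations $0,\dots,t-1$, each ``live'' vertex $w$ at distance $t-1$ from $v$ looks at its neighbors $w+e_i$ along the coordinates $i$ not yet used on the path from $v$ to $w$, declares those lying in $\Gamma_n$ to be its children, and we move to generation $t$. Because each explored vertex kills at most one coordinate (the one flipped to reach it) and paths have length at most $\lfloor\tfrac14 u_n n\rfloor \ll u_n n$, there are always at least $n-z_n\ge n-\lfloor\tfrac12 u_n n\rfloor$ fresh coordinates available, so each vertex has at least $\mathrm{Bin}(m,\lambda_n)$ available neighbors to test, independently of the history.

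Next I would make the coupling precise: the number of vertices discovered by this exploration stochastically dominates $(Z_t)_{t\ge0}$, the Galton--Watson process of Theorem~\ref{T:galton}, at least until the total population first reaches $\lfloor\tfrac14 u_n n\rfloor$. Two issues must be handled here. The first is the independence of the children sets: distinct explored vertices test disjoint sets of $Q_2^n$-vertices — this is exactly why one needs the exploration to be along edge-disjoint, coordinate-disjoint BFS paths, and it is guaranteed by the labeling into blocks $x^{(1)},\dots,x^{(k+1)}$, after which no coordinate index is reused. The second is the possibility of collisions: two different live vertices may both point to the same neighbor $u$; in that case the real subgraph is ``ahead'' of the tree (a collision can only merge, never shrink, the component), so domination is preserved. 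Hence, writing $T_t=\sum_{s\le t}Z_s$, the event that the constructed subcomponent has size $\ge\lfloor\tfrac14 u_n n\rfloor$ contains the event $\{T_t\ge\lfloor\tfrac14 u_n n\rfloor \text{ for some }t\}$, which in turn contains $\{Z_t>0\text{ for all }t\}\cap\{\text{no early extinction below size }\lfloor\tfrac14 u_n n\rfloor\}$. In the supercritical regime $\epsilon\ge\chi_n\ge n^{-1/3+\delta}$ the surviving process grows to infinity, so conditioned on survival it a.s.\ exceeds any fixed $\lfloor\tfrac14 u_n n\rfloor$; the subtlety is only that we must truncate at a finite generation, and standard branching-process estimates (e.g.\ once $Z_t$ reaches a slowly growing threshold it survives w.h.p.\ and reaches $\lfloor\tfrac14 u_n n\rfloor$) show the probability lost by truncation is $o(1)$ relative to $\pi(\chi_n)$. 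Therefore the probability that $v$ lies in such a subcomponent is at least $(1-o(1))\lim_{t\to\infty}\mathrm{Prob}(Z_t>0)=\pi(\chi_n)$, which — absorbing the $o(1)$ into the $o(1)$ already present in the definition of $\pi(\chi_n)$ — gives the claimed bound.

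The main obstacle, I expect, is the bookkeeping that keeps the exploration honest: one must verify that at no point before reaching size $\lfloor\tfrac14 u_n n\rfloor$ does the exploration run out of fresh coordinates, and that the ``at least $m$ available neighbors'' property holds uniformly, so that the binomial lower bound — and hence the coupling with the exact process of Theorem~\ref{T:galton}, with its delicate constants $\nu_n,\iota_n,z_n$ — is valid throughout. This is precisely where the specific choice $m=n-\lfloor\tfrac34 u_n n\rfloor$ in Theorem~\ref{T:galton} leaves room: the true exploration loses at most $\lfloor\tfrac12 u_n n\rfloor$ coordinates, strictly fewer than $\lfloor\tfrac34 u_n n\rfloor$, giving genuine (not just asymptotic) stochastic domination. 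The secondary obstacle is controlling the truncation error in the subcritical-looking but still supercritical window $\chi_n=o(1)$, where $\pi(\chi_n)=2(1+o(1))\chi_n\to0$: here one needs the lost probability from stopping at a finite generation to be $o(\chi_n)$, not merely $o(1)$, which follows from the fact that a supercritical process that survives reaches size $\omega(1)$ within $O(\chi_n^{-1})$ generations and from there climbs to $\lfloor\tfrac14 u_n n\rfloor$ with probability $1-o(1)$.
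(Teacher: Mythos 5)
Your overall strategy is the same as the paper's: couple an exploration of $\Gamma_n$ around the fixed vertex $v$ with the Galton--Watson process of Theorem~\ref{T:galton}, using the reserved block of coordinates to keep the per-vertex offspring distribution at least $B_m(\cdot,\lambda_n)$ with $m=n-\lfloor\tfrac34 u_nn\rfloor$. However, there is a genuine gap in how you make the coupling rigorous. You describe a BFS in which a vertex $w$ only avoids ``the coordinates $i$ not yet used on the path from $v$ to $w$,'' i.e.\ path-local bookkeeping. Under that rule two distinct live vertices $w,w'$ can indeed both point to the same target $u=w+e_i=w'+e_j$, which is the collision issue you raise -- and your resolution of it is not valid. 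Saying that ``a collision can only merge, never shrink, the component, so domination is preserved'' confuses the graph-theoretic statement (true) with the probabilistic one (false): when a collision occurs, the second parent does not get a fresh Bernoulli trial at $u$, so its offspring count is no longer distributed as $\mathrm{Bin}(m,\lambda_n)$, and the number of \emph{distinct} vertices discovered does \emph{not} stochastically dominate the branching total without a further estimate on how rare collisions are. You also attribute the ``no coordinate reused'' property to the block labeling of eq.~(\ref{E:not-seq}); that labeling merely reserves the first $z_n$ coordinates for Lemma~\ref{L:poly} and does nothing to prevent reuse inside the exploration.

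The paper's proof avoids this entirely by maintaining a single global set $E_s$ of still-available unit vectors: whenever a vertex $w+e_j$ is accepted, $e_j$ is deleted from $E_s$ for the remainder of the process, and each subsequent vertex tests $m$ directions drawn only from the current $E_s$. Consequently the explored set is an induced \emph{sub-tree} of $Q_2^{n-z_n}$ with all tree edges using distinct coordinates; one checks directly that two candidates $w+e_r$ and $w'+e_{r'}$ with $e_r,e_{r'}\in E_s$ can never coincide (their difference would force a used coordinate to equal a sum of two unused ones). This makes all Bernoulli trials disjoint and the coupling with the branching process exact, with no collision term to bound. You should replace your path-local rule by this global deletion rule (which, I suspect, is what you were reaching for when you wrote ``no coordinate index is reused''), and then the collision paragraph can be dropped. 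Finally, your concern about ``truncating at a finite generation'' and losing $o(\chi_n)$ is unnecessary: the process is stopped when the accumulated population first reaches $\lfloor\tfrac14 u_nn\rfloor$, and infinite survival of $(Z_t)$ already implies unbounded total population, so $\mathbb{P}(\text{total}\ge\lfloor\tfrac14 u_nn\rfloor)\ge\lim_{t\to\infty}\mathrm{Prob}(Z_t>0)=\pi(\chi_n)$ with no error term to absorb.
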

\begin{proof}
We consider a branching-process in the sub-cube $Q_2^{n-z_n}$ 
(eq.~(\ref{E:not-seq})). 
W.l.o.g.~we initialize the process at $v=(0,\dots,0)$ and set 
$E_0=\{e_{n-z_n+1},\dots,e_n\}$ and $L_0^{(0)}=\{(0,\dots,0)\}$.
We consider the $n-\lfloor  \frac{3}{4}u_nn\rfloor$ smallest neighbors of $v$. 
Starting with the smallest we select each of them with 
independent probability $\lambda_n=\frac{1+\chi_n}{n}$. 
Suppose $v+e_j$ is the first
being selected. Then we set $E_1=E_0\setminus \{e_j\}$ and 
$L_1^{(0)}=L_0^{(0)}\cup \{e_j\}$ and proceed inductively setting 
$E_s=E_{s-1}\setminus\{e_{w}\}$ and $L_t^{(0)}=L_{t-1}^{(0)}\cup 
\{e_w\}$ for each neighbor $v+e_{w}$ being selected. We are given 
the following alternative, either we have (a)
$\vert E_s\vert=n-(\lfloor\frac{3}{4}\,u_nn\rfloor-1)$ or
(b) $\vert E_s\vert > n-(\lfloor\frac{3}{4}\,u_nn\rfloor-1)$ and 
all smallest $n-\lfloor\frac{3}{4}\,u_nn\rfloor$ neighbors of 
    $(0,\dots,0)$ are checked.
In case of (a) $\lfloor\frac{1}{4}\,u_nn\rfloor-1$ vertices have been 
connected and since
$$ 
n-z_n- (\lfloor\frac{1}{4}\,u_nn\rfloor -1)\ge n-\frac{1}{2}u_n n
 -\frac{1}{4}u_nn +1 \ge n-\lfloor\frac{3}{4}u_nn\rfloor 
$$
there are still $\ge n-\lfloor\frac{3}{4}u_nn\rfloor$ neighbors available. 
Suppose $\vert E_s\vert>n-(\lfloor\frac{3}{4}u_nn\rfloor-1)$ and all smallest 
$n-\lfloor\frac{3}{4}\,u_nn\rfloor$ neighbors of $(0,\dots,0)$ were 
examined. Then we proceed by choosing the smallest element of 
$L_{t_0}^{(0)}\setminus \{0\}$, $v_1^*$ and set 
$L_0^{(1)}=L_{t_0}^{(0)}\setminus \{v_1^*\}$. By construction, $v_1^*$ 
has at least $n-\lfloor\frac{3}{4}u_nn\rfloor$ neighbors of the form 
$v_1^*+e_r$ $e_r\in E_s$. 
We begin with the smallest of these and continue selecting with 
probability $\frac{1+\chi_n}{n}$ setting $E_s=E_{s-1}\setminus\{e_{j}\}$ 
and $L_t^{(1)}=L_{t-1}^{(1)}\cup\{ v_1^*+e_j\}$ for each neighbor 
$v_1^*+e_{j}$ being selected. 
We continue inductively setting $L_0^{(r)}=L_{t_{r-1}}^{(r-1)}\setminus 
\{v_r^*\}$ and stop in case of (a).
By construction, this process yields an induced sub-tree of $Q_2^{n-z_n}$. 
Theorem~\ref{T:galton} guarantees that we have a $\Gamma_n$-subcomponent 
of size $\lfloor \frac{1}{4}u_n n\rfloor$ with probability at least
$\pi(\chi_n)$. 
\end{proof}
We refer to the particular branching process used in Lemma~\ref{L:G} as 
$\gamma$-process. The $\gamma$-process produces a subcomponent
of size $\lfloor \frac{1}{4}u_n n\rfloor$ , which we refer to as
$\gamma$-(sc).


\section{Small subcomponents}

The $\gamma$-process employed in Lemma~\ref{L:G} did by construction not 
involve the first $z_n$ coordinates. In the following lemma we will use 
the first $k\,\nu_n$ of them in order to build inductively larger 
subcomponents (sc). 
\begin{lemma}\label{L:poly}
Let $k\in\mathbb{N}$ be arbitrary but fixed, $\lambda_n=
\frac{1+\chi_n}{n}$, $\nu_n=\lfloor\frac{u_nn}{2k(k+1)}\rfloor$
and $\varphi_n=\pi(\chi_n)\nu_n (1-e^{-(1+\chi_n)u_n/4})$.
Then there exists $\rho_k>0$ such that each $\Gamma_n$-vertex is 
with probability at least 
\begin{equation}\label{E:ist}
\pi_k(\chi_n)=
\pi(\chi_n)\,\left(1-e^{-\rho_k \varphi_n}
\right)
\end{equation}
contained in a $\Gamma_n$-subcomponent of size at least
$ c_k \,(u_nn)\varphi_n^k$, where $c_k>0$.
\end{lemma}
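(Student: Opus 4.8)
The plan is to prove Lemma~\ref{L:poly} by induction on $k$, building ever-larger subcomponents by successively using fresh blocks of coordinates. The base case $k=0$ (or $k=1$, depending on how one sets up the bookkeeping) is essentially Lemma~\ref{L:G}: a single $\gamma$-process living in the last $n-z_n$ coordinates produces, with probability at least $\pi(\chi_n)$, a $\gamma$-(sc) of size $\lfloor\frac{1}{4}u_nn\rfloor$, which is of the required form $c_k(u_nn)\varphi_n^k$ for the appropriate small constant $c_k$ when $k$ is minimal. The key structural point, already flagged in the paragraph before the lemma, is that the $\gamma$-process does not touch the first $z_n=k\nu_n+\iota_n$ coordinates, so these remain ``unexposed'' and can be spent later.

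For the inductive step, suppose each $\Gamma_n$-vertex lies in a subcomponent of size at least $c_{k-1}(u_nn)\varphi_n^{k-1}$ with probability at least $\pi_{k-1}(\chi_n)$, using only coordinates outside the first $(k-1)\nu_n$ slots. I would start from a vertex $v$ that sits in such a subcomponent $S$. Now I consider the $\nu_n$ translates obtained by flipping one of the coordinates $e_1^{(k)},\dots,e_{\nu_n}^{(k)}$ (the $k$-th fresh block of $\nu_n$ coordinates, which has so far been left alone). For each index $s$, the translate $S+e_s^{(k)}$ is again a copy of a potential subcomponent living on the same untouched coordinates as $S$, and these $\nu_n$ events are governed by disjoint sets of vertices, hence independent. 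For each $s$, with probability at least $\pi(\chi_n)$ a $\gamma$-(sc) of size $\lfloor\frac14 u_nn\rfloor$ grows from some vertex of $S+e_s^{(k)}$; moreover, in order to actually attach this new piece to $S$, one needs at least one vertex of $S+e_s^{(k)}$ that is itself selected and is the image $w+e_s^{(k)}$ of a selected vertex $w\in S$ --- and the probability that this ``bridge'' exists is where the factor $(1-e^{-(1+\chi_n)u_n/4})$ comes from, since $|S|\gtrsim u_nn/4$ vertices each independently offer such a connection with probability $\lambda_n$. Combining, each of the $\nu_n$ translates succeeds in contributing an attached block with probability at least $\rho_k$ for some constant $\rho_k>0$ (absorbing $\pi(\chi_n)$ and the lower-order bridge factor into the definition of $\varphi_n=\pi(\chi_n)\nu_n(1-e^{-(1+\chi_n)u_n/4})$), and the number of successful translates stochastically dominates a binomial random variable with mean on the order of $\varphi_n$. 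A Chernoff bound (eq.~(\ref{E:cher})) then shows that with probability at least $1-e^{-\rho_k\varphi_n}$ at least a constant fraction $\asymp\varphi_n$ of the translates attach, each bringing its own $\gamma$-(sc) of size $\asymp u_nn$ together with (by induction) further attached structure of size $\asymp(u_nn)\varphi_n^{k-1}$, so the new subcomponent has size at least $c_k(u_nn)\varphi_n^k$. Multiplying the conditional success probability $1-e^{-\rho_k\varphi_n}$ by the probability $\pi(\chi_n)$ that the initial $\gamma$-(sc) forms at all gives $\pi_k(\chi_n)=\pi(\chi_n)(1-e^{-\rho_k\varphi_n})$, and one checks $z_n=k\nu_n+\iota_n$ is chosen precisely so that after $k$ rounds there are still enough coordinates left for every $\gamma$-process invoked (each needs $\ge n-\lfloor\frac34 u_nn\rfloor$ available directions, and $n-z_n\ge n-\lfloor\frac12 u_nn\rfloor$ by eq.~(\ref{E:not-seq})).

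The main obstacle is making the independence and disjointness bookkeeping rigorous: the translates $S+e_s^{(k)}$ must be shown to involve genuinely disjoint vertex sets (so the selection events are independent), which relies on the coordinate $s+(k-1)\nu_n$ being one that none of the earlier $\gamma$-processes and none of the earlier inductive rounds ever flipped --- this is exactly why the vertex $v=(x_1,\dots,x_n)$ is decomposed into the blocks of eq.~(\ref{E:not-seq}) and why $\nu_n$ is taken so small relative to $u_nn/(2k(k+1))$. A secondary subtlety is that the subcomponent guaranteed by the inductive hypothesis for $S+e_s^{(k)}$ must itself avoid the $k$-th block of coordinates; this is ensured by applying the hypothesis with the coordinates relabelled so that the first $(k-1)\nu_n$ untouched slots are the blocks $1,\dots,k-1$, leaving block $k$ free. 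Finally, one must verify $\varphi_n\to\infty$ (so that $1-e^{-\rho_k\varphi_n}\to 1$ and the success probability is asymptotically $\pi(\chi_n)$): since $\nu_n\asymp u_nn=n^{2/3}$ and $(1-e^{-(1+\chi_n)u_n/4})\asymp u_n=n^{-1/3}$, we get $\varphi_n\asymp\pi(\chi_n)\,n^{1/3}$, which tends to infinity in both regimes of $\chi_n$ because $\pi(\chi_n)\ge 2(1+o(1))n^{-1/3+\delta}$ when $\chi_n=o(1)$, giving $\varphi_n\gtrsim n^{\delta}\to\infty$, and $\varphi_n\asymp\alpha(\epsilon)n^{1/3}\to\infty$ when $\chi_n=\epsilon$. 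This last estimate is what forces the hypothesis $\chi_n\ge n^{-1/3+\delta}$ rather than merely $\chi_n\ge n^{-1/3}$.
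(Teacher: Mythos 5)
Your construction is in spirit the same as the paper's---translate by fresh coordinate blocks, test for a bridge with probability $\mu_n=1-e^{-(1+\chi_n)u_n/4}$, restart a $\gamma$-process at the bridge, and use the Chernoff bound (\ref{E:cher}) to concentrate the number of successes around $\varphi_n$. The difference is structural: the paper fixes $k$ once and for all, partitions the coordinates into $B_1,\dots,B_k,B_{k+1},H$ of fixed sizes $\nu_n,\iota_n,n-z_n$, and then runs a single \emph{internal} iteration $i=1,\dots,k$ in which the random variables $\tilde X_i$ count only the $\gamma$-(sc)'s of fixed size $\lfloor\frac14 u_n n\rfloor$ created at step $i$ (the ``frontier''); the recursion $\mathbb{E}[\tilde X_{i+1}]\gtrsim\tilde X_i\varphi_n$ is driven by translating only those frontier sets. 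You instead induct on the lemma's own parameter $k$ and translate the whole accumulated subcomponent $S$.

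This restructuring hides a real gap: $\nu_n=\lfloor u_n n/(2k(k+1))\rfloor$, $\iota_n$, and hence $z_n$ and $\varphi_n$ all depend on $k$. If you literally invoke the lemma at level $k-1$, the subcomponent you obtain is built from blocks of size $\nu_{n,k-1}=\lfloor u_n n/(2(k-1)k)\rfloor>\nu_{n,k}$, so it occupies $(k-1)\nu_{n,k-1}$ coordinates --- more than the $(k-1)\nu_{n,k}$ slots you have budgeted for blocks $B_1,\dots,B_{k-1}$ in the $k$-level decomposition (\ref{E:not-seq}), and the coordinate partitions at levels $k-1$ and $k$ do not nest. Your phrase ``applying the hypothesis with the coordinates relabelled'' gestures at the repair, but what is actually needed is to strengthen the inductive statement to ``\,with blocks of the \emph{fixed} size $\nu_{n,k}$ and the corresponding $\varphi_{n,k}$, after $i$ rounds one obtains a subcomponent of size $\ge c_i(u_n n)\varphi_{n,k}^i$ with probability $\ge\pi(\chi_n)(1-e^{-\rho_i\varphi_{n,k}})$\,'' --- which is precisely the paper's internal induction on the round index $i$, not an induction on the lemma's $k$. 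A second, smaller slip: ``each of the $\nu_n$ translates succeeds\dots with probability at least $\rho_k$ for some constant $\rho_k>0$'' is false; the per-translate success probability is $\approx\mu_n\pi(\chi_n)\to 0$ (indeed of order $n^{-1/3}\pi(\chi_n)$). What saves the argument is that the \emph{expected number} of successes over $\nu_n$ independent trials is $\asymp\varphi_n\to\infty$, which is what (\ref{E:cher}) actually needs; your later sentences say this correctly, so this is a matter of wording rather than substance. With the inductive statement reformulated at fixed block size, your proof and the paper's coincide.
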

Lemma~\ref{L:poly} gives rise to introduce the induced subgraph 
$\Gamma_{n,k}=Q_2^n[A]$ where
\begin{equation}\label{E:'}
A=\{v\mid \text{\rm $v$ is contained in a 
$\Gamma_n$-(sc) of size $\ge c_k\, (u_nn)\varphi_n^k$, $c_k>0$}\} \ .
\end{equation}
In case of 
$\epsilon \ge \chi_n\ge n^{-\frac{1}{3}+\delta}$ 
we have $1-e^{-\frac{1}{4}(1+\chi_n)u_n}\ge \,u_n/4$ 
and consequently 
$\varphi_n \ge c'\,(1+o(1))\chi_n u_n^2\,n\ge c_0\,n^{\delta}$ for 
some $c',c_0>0$. Furthermore
\begin{equation}\label{E:cool2}
\lfloor \frac{1}{4}u_nn\rfloor \, \varphi_n^k\ge  
c_k \, n^{\frac{2}{3}} n^{k\delta}, 
\quad c_k>0 \ .
\end{equation}
Accordingly, choosing $k$ sufficiently large, each $\Gamma_n$-vertex is 
contained in a (sc) of arbitrary polynomial size with probability at least
\begin{equation}
\pi(\chi_n)\,\left(1-e^{-\rho_k n^{\delta}}\right), 
\quad 0<\delta  , \ 0<\rho_k\ .
\end{equation}
\begin{proof}
Since all translations are $Q_2^n$-automorphisms we can w.l.o.g.~assume that 
$v=(0,\dots,0)$. We use the notation of eq.~(\ref{E:not-seq}) and recruit 
the $n-z_n$-unit vectors $e_t$ for a $\gamma$-process. 
The $\gamma$-process of Lemma~\ref{L:G} yields a $\gamma$-(sc), $C(0)$, of 
size $\lfloor \frac{1}{4} u_n n\rfloor$ with probability $\ge\pi(\chi_n)$.  
We consider for $1\le i\le k$ the sets of $\nu_n$ elements 
$B_i=\{e_1^{(i)},\dots, e_{\nu_n}^{(i)}\}$ and set 
$H=\langle e_{u_n+1},\dots,e_n\rangle$. By construction we have
\begin{equation}\label{E:linear}
\langle B_i\cup \langle \bigcup_{1\le j\le i-1}B_j\rangle \oplus H\rangle=
\langle B_i\rangle \oplus\langle \bigcup_{1\le j\le i-1}B_j\rangle \oplus H \ .
\end{equation}
In particular, for any $1\le s<j\le \nu_n$: $e_s^{(1)}-e_j^{(1)}\in H$ 
is equivalent to $e_s^{(1)}=e_j^{(1)}$. Since all vertices are selected 
independently and $\vert C(0)\vert=\lfloor\frac{1}{4}u_n n
\rfloor$, for fixed $e_s^{(1)}\in B_1$ the 
probability of not selecting a vertex $v'\in e_s^{(1)}+C(0)$ is given by
\begin{equation}\label{E:failure}
\mathbb{P}\left(\left\{e_s^{(1)}+\xi\mid \xi\in C(0)\right\}\cap\Gamma_n=
\varnothing\right)=
\left(1-\frac{1+\chi_n}{n}\right)^{\lfloor\frac{1}{4}u_nn
\rfloor}\sim 
e^{-(1+\chi_n)\frac{1}{4}u_n} \ .
\end{equation}
We set $\mu_n =(1-e^{-(1+\chi_n)\frac{1}{4}u_n})$, i.e. 
$\mu_n=
\mathbb{P}\left((e_s^{(1)}+C(0))\cap \Gamma_n \neq \varnothing\right)$ and 
introduce the r.v.
\begin{equation}
X_1=
\left| \left\{ e_s^{(1)}\in B_1 \mid \exists\, \xi\in C(0);\, 
         e_s^{(1)}+\xi\in \Gamma_n \right\} \right| \ .
\end{equation}
Obviously, $\mathbb{E}(X_1)=\mu_n \nu_n$ and 
using the large deviation result of eq.~(\ref{E:cher}) we can conclude that 
\begin{equation}
\exists \,\rho>0;\quad 
\mathbb{P}\left(X_1 < \frac{1}{2} \mu_n\nu_n \right)\le
e^{-\rho\,\mu_n \nu_n} \ .
\end{equation}
Suppose for $e_s^{(1)}$ there exists some $\xi\in C(0)$ such that
$e_s^{(1)}+\xi\in \Gamma_n$ (that is $e_s^{(1)}$ is counted by $X_1$). 
We then select the smallest
element of the set $\{e_s^{(1)}+\xi\mid 
\xi\in C(0)\}$, say $e_s^{(1)}+\xi_0$ and initiate 
a $\gamma$-process using the $n-z_n$ elements 
$\{e_{z_n+1},\dots,e_n\}$ at $e_s^{(1)}+\xi_0$.
The process yields a $\gamma$-(sc) of size 
$\lfloor\frac{1}{4}u_nn\rfloor$ with probability at least $\pi(\chi_n)$. 
For any two elements $e_s^{(1)},e_j^{(1)}$ with $e_s^{(1)}+\xi(e_s^{(1)}),
e_j^{(1)}+\xi(e_j^{(1)})\in\Gamma_n$ the respective sets are vertex 
disjoint since $\langle B_1 \cup H\rangle=\langle B_1\rangle\oplus
H$.
Let $\tilde{X}_1$ be the random variable counting the number of these new, 
pairwise vertex disjoint sets of $\gamma$-(sc) of size 
$\lfloor \frac{1}{4}u_n n\rfloor$.
By construction each of them is connected to $C(0)$.
We immediately observe $\mathbb{E}(\tilde{X}_1)\ge \pi(\chi_n)\mu_n\nu_n$ 
and set $\varphi_n=\pi(\chi_n)\mu_n\nu_n$. Using the large deviation 
result in eq.~(\ref{E:cher}) we derive 
\begin{equation}
\exists \,\rho_1>0;\quad 
\mathbb{P}\left(\tilde{X}_1 < \frac{1}{2} \varphi_n\right)\le 
e^{-\rho_1\varphi_n} \ .
\end{equation}
We proceed by proving that for each $1\le i\le k$ there exists a sequence
of r.v.s $(\tilde{X}_1,\tilde{X}_2,\dots,\tilde{X}_i)$ where $\tilde{X}_i$ 
counts the number of pairwise disjoint sets of $\gamma$-(sc) added at 
step $1\le j\le i$ such that:\\
(a) all sets, $C_\alpha^{(j)}$, $1\le j\le i$, added until step $i$ are 
    pairwise vertex disjoint and are of size $\lfloor \frac{1}{4}u_nn\rfloor$\\
(b) all sets added until step $i$ are connected to $C(0)$ and
\begin{equation}
\exists \,\rho_i>0;\quad 
\mathbb{P}\left( \tilde{X}_i< \frac{1}{2^i}(\varphi_n)^i\right) \le
e^{-\rho_i\varphi_n} \ , \ \text{\rm where}\ \varphi_n=
\pi(\chi_n)\mu_n\nu_n \ .
\end{equation}
We prove the assertion by induction on $i$. Indeed in our construction of 
$\tilde{X}_1$ have already established the induction basis. 
In order to define $\tilde{X}_{i+1}$ we use the set $B_{i+1}=
\{e_{1}^{(i+1)},\dots,e_{\nu_n}^{(i+1)}\}$. For each
$C_\alpha^{(i)}$ counted by $\tilde{X}_i$ (i.e.~the vertices that were 
connected in step $i$) we form the set $e_s^{(i+1)}+C_\alpha^{(i)}$. 
By induction hypothesis two different $C_\alpha^{(i)},C_{\alpha'}^{(i)}$, 
counted by $\tilde{X}_i$ are vertex disjoint and connected to $C(0)$. 
Since $\langle B_{i+1}\rangle\bigoplus \langle \bigcup_{1\le j\le 
i}B_j\rangle \bigoplus H$ are disjoint we can conclude 
$$
 (s\neq s'\; \vee\; \alpha\neq \alpha)\quad \Longrightarrow\quad 
(e_s^{(i+1)}+C_\alpha^{(i)}) \cap 
(e_{s'}^{(i+1)}+C_{\alpha'}^{(i)})=\varnothing
$$ 
and the probability that we have 
for fixed $C_\alpha^{(i)}$: $(e_s^{(i+1)}+C_\alpha^{(i)})\cap\Gamma_n=
\varnothing$ for some $e_s^{(i+1)}\in B_{i+1}$ is 
exactly as in eq.~(\ref{E:failure})
$$
\mathbb{P}\left((e_s^{(i+1)}+C_\alpha^{(i)})\cap\Gamma_n=\varnothing\right)=
\left(1-\frac{1+\chi_n}{n}\right)^{\lfloor\frac{1}{4}u_nn\rfloor}
\sim e^{-(1+\chi_n)\frac{1}{4}u_n}\ .
$$
As for the induction basis, $\mu_n =(1-e^{-(1+\chi_n)\frac{1}{4}u_n})$ is the 
probability that $(e_s^{(i+1)}+C_\alpha^{(i)})\cap\Gamma_n\neq \varnothing$. 
We proceed by defining the r.v.
\begin{equation}
X_{i+1}=\sum_{C_\alpha^{(i)}}
\left| \left\{ e_s^{(i+1)}\in B_{i+1} \mid \exists\, \xi\in C_\alpha^{(i)};\, 
         e_s^{(i+1)}+\xi\in \Gamma_n \right\} \right| \ .
\end{equation}
$X_{i+1}$ counts the number of events where 
$(e_s^{(i+1)}+C_\alpha^{(i)})\cap\Gamma_n\neq \varnothing$ for each 
$C_\alpha^{(i)}$, respectively. For fixed $C_\alpha^{(i)}$ and fixed 
$e_s^{(i+1)}\in B_{i+1}$ we choose the minimal element 
$$
e_s^{(i+1)}+\xi_{0,\alpha}\in \left\{ e_s^{(i+1)}+\xi_{\alpha}\mid  
\xi_\alpha\in C_\alpha^{(i)},\, e_s^{(i+1)}+\xi_\alpha\in \Gamma_n \right\}\ .
$$
Then $X_{i+1}$ counts exactly the minimal elements 
$e_s^{(i+1)}+\xi_{0,\alpha},e_{s'}^{(i+1)}+\xi_{0,\alpha'},\dots$ 
for all $C_\alpha^{(i)},{C}_{\alpha'}^{(i)},\dots$
and any two can be used to construct pairwise vertex disjoint $\gamma$-(sc)
of size $ \lfloor\frac{1}{4}u_nn\rfloor$.
We next define $\tilde{X}_{i+1}$ to be the r.v.~counting the number
of events that the $\gamma$-process in $H$ initiated at the
$e_s^{(i+1)}+\xi_{0,\alpha}\in \Gamma_n$ yields a $\gamma$-(sc) of size 
$\lfloor \frac{1}{4}u_nn\rfloor$. By construction each of these 
is connected to a unique $C_\alpha^{(i)}$.
Since $\langle B_{i+1}\rangle \bigoplus \langle \bigcup_{1\le j\le i}
B_j\rangle \bigoplus H$ all newly added sets are pairwise vertex disjoint 
to all previously added vertices. We derive
\begin{eqnarray*} 
\mathbb{P}\left(\tilde{X}_{i+1} < \frac{1}{2^{i+1}} 
\varphi_n^{i+1}\right)  
& \le & 
\underbrace{\mathbb{P}\left( \tilde{X}_i <
 \frac{1}{2^i}\varphi_n^i\right)}_{\text{\rm failure at step $i$}}
\ +\underbrace{
\mathbb{P}\left(\tilde{X}_{i+1} <\frac{1}{2^{i+1}} 
\varphi_n^{i+1} \, \wedge \, 
\tilde{X}_i \ge \frac{1}{2^i} \varphi_n^i\right)
}_{\text{\rm failure at step $i+1$ conditional to $\tilde{X}_i\ge 
\frac{1}{2^i}\varphi_n^i$}} \\
 & \le & e^{-\rho_i\,\varphi_n} + 
e^{-\rho \,\varphi_n^{i+1}} 
(1-e^{-\rho_i\, \varphi_n})\, , \quad \rho>0 \\
& \le & e^{-\rho_{i+1}\,\varphi_n} \ .
\end{eqnarray*}
Therefore each $\Gamma_n$-vertex is with probability at least 
$\pi(\chi_n) \,(1-e^{-\rho_k\varphi_n})$ contained 
in a $\Gamma_n$-(sc) of size at least $c_k\, (\chi_nn)\varphi_n^k$, 
for $c_k>0$ and the lemma is proved.
\end{proof}
We next prove a technical lemma which will be instrumental for the proof
of Lemma~\ref{L:size}.
We show that the number of vertices not contained in $\Gamma_{n,k}$ 
is sharply concentrated, using a strategy
similar to that in Bollob\'as~{\it et.al.} \cite{Bollobas:92}. 
Let $U_n$ denote the complement of $\Gamma_{n,k}$ in $\Gamma_n$.

\begin{lemma}\label{L:mini}
Let $k\in\mathbb{N}$ and $\lambda_n=\frac{1+\chi_n}{n}$, where 
$\epsilon\ge \chi_n\ge n^{-\frac{1}{3}+\delta}$. 
Then we have
\begin{equation}
\mathbb{P}\left(\vert\,\vert U_n \vert -\mathbb{E}[\vert U_n\vert]\,\vert
\ge \frac{1}{n} \mathbb{E}[\vert U_n\vert]\right)=o(1) \ .
\end{equation}
\end{lemma}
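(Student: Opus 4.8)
The plan is to use a standard martingale (Azuma–Hoeffding / vertex-exposure) argument, following the strategy of Bollob\'as, Kohayakawa and Luczak \cite{Bollobas:92}. The key point is to find a suitable filtration so that revealing one more piece of information about $\Gamma_n$ changes $\vert U_n\vert$ by a bounded amount, and then to check that the resulting concentration radius is $o(n^{-1})\,\mathbb{E}[\vert U_n\vert]$. Since $\mathbb{E}[\vert U_n\vert] = (1-\pi_k(\chi_n))\lambda_n 2^n = \Theta(2^n/n)$ and, by Lemma~\ref{L:poly}, $1-\pi_k(\chi_n) = O(e^{-\rho_k n^\delta})$ when $\chi_n=o(1)$ (respectively $1-\pi_k(\epsilon)$ is bounded away from $0$), the target bound is $\frac{1}{n}\mathbb{E}[\vert U_n\vert] = \Theta(2^n/n^2)$ in the constant case, so I have a fair amount of room.

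First I would fix the exposure order: expose the status (selected / not selected) of the $2^n$ vertices one at a time, in the linear order $\le$ of eq.~(\ref{E:order}), obtaining a Doob martingale $M_j = \mathbb{E}[\,\vert U_n\vert \mid \text{first } j \text{ vertices exposed}\,]$ with $M_0 = \mathbb{E}[\vert U_n\vert]$ and $M_{2^n} = \vert U_n\vert$. The crucial step is bounding the martingale differences $\vert M_j - M_{j-1}\vert$. Changing whether a single vertex $v$ is selected can only affect membership in $\Gamma_{n,k}$ for vertices that could be joined to $v$ through a subcomponent of the size guaranteed by Lemma~\ref{L:poly}; since each such subcomponent has size at most (polynomially) bounded — the constructions in Lemma~\ref{L:poly} live inside the ball $\text{\sf B}(v,u_nn)$ — flipping $v$ changes $\vert U_n\vert$ by at most $b_n := \vert \text{\sf B}(0,u_nn)\vert = 2^{n(1+o(1))\cdot\text{something}}$, which is too crude. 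The real work is to argue that the \emph{expected} effect is much smaller: conditioning on the first $j-1$ vertices, the vertex $v_j$ is selected with probability $\lambda_n$, and the difference $M_j - M_{j-1}$ is bounded by $\lambda_n$ times a deterministic bound $b_n$ on how many $U_n$-memberships the single site $v_j$ can influence. So I would use the sharper estimate $\vert M_j - M_{j-1}\vert \le c_j$ with $\sum_j c_j^2$ controlled, via the Azuma inequality in the form $\mathbb{P}(\vert M_{2^n}-M_0\vert \ge t) \le 2\exp(-t^2/(2\sum_j c_j^2))$.

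To make $\sum_j c_j^2$ small enough I would split the vertices into two groups. For a vertex $v$ that already lies in a large $\gamma$-type subcomponent (an event of probability $\ge \pi_k(\chi_n)$, hence the complement is rare), exposing further vertices barely changes the conditional expectation; for the remaining $O((1-\pi_k(\chi_n))2^n)$ vertices one can afford a larger per-step bound. Concretely I expect $c_j \le \lambda_n \cdot \text{(local influence)}$ where the local influence is at most the number of vertices within distance $\le u_nn$ whose $\Gamma_{n,k}$-status genuinely depends on $v_j$; bounding $\sum_j c_j^2$ by (number of relevant pairs) $\times \lambda_n^2 \times (\text{poly bound})^2$ and comparing with $t^2 = (\frac{1}{n}\mathbb{E}[\vert U_n\vert])^2 = \Theta(2^{2n}/n^4)$ should give an exponent that tends to $+\infty$, i.e.~probability $o(1)$, in fact super-polynomially small.

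The main obstacle is exactly this influence bound: one must show that a single vertex affects the event ``lies in a $\Gamma_n$-(sc) of size $\ge c_k(u_nn)\varphi_n^k$'' for only a controlled number of vertices, and that this control survives squaring and summing against the small factor $\lambda_n^2 \sim n^{-2}$. Here I would lean on the locality of the $\gamma$-process and the inductive construction in Lemma~\ref{L:poly}: all vertices whose membership in $\Gamma_{n,k}$ can be altered by toggling $v$ lie within Hamming distance $z_n + u_n n = O(u_n n)$ of $v$, and the number of such vertices, while exponential, is dominated by the slack in $\mathbb{E}[\vert U_n\vert]$ because in the regime $\chi_n = o(1)$ we have $1-\pi_k(\chi_n) = O(e^{-\rho_k n^\delta})$, an exponential gain we can trade against $\vert\text{\sf B}(0,O(u_nn))\vert$. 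A cleaner alternative, which I would pursue if the direct estimate is messy, is to expose vertices in blocks (e.g.~by coordinate-subcube, mirroring eq.~(\ref{E:not-seq})) so that each martingale step corresponds to a whole sub-cube and the number of steps is only polynomial in $n$, at the cost of a larger but still manageable per-step Lipschitz constant; Azuma then closes the argument immediately.
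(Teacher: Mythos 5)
Your proposal replaces the paper's second-moment argument with an Azuma/vertex-exposure martingale, which is a genuinely different route; unfortunately, it does not close, and the central estimate you lean on is false as stated.

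The gap is the claimed local influence bound. You assert that toggling a single vertex $v$ can only change the $U_n$-status of vertices in $\text{\sf B}(v,O(u_nn))$, on the grounds that the construction in Lemma~\ref{L:poly} lives in a ball. But $U_n$ is the set of $\Gamma_n$-vertices \emph{not} contained in any connected $\Gamma_n$-subgraph of size $\ge c_k(u_nn)\varphi_n^k$, i.e.~the set of vertices whose component is small — it is not defined by whether the specific $\gamma$-construction around $v$ succeeds. Removing a selected vertex $v$ can therefore disconnect a component, and a vertex $w$ at Hamming distance far beyond $u_nn$ from $v$ (indeed anywhere in $v$'s component, which may have size $\Theta(2^n/n)$) can flip from $\Gamma_{n,k}$ to $U_n$. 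So the honest worst-case Lipschitz constant for the vertex-exposure martingale is of order the largest component size, $\Theta(2^n/n)$, and $\sum_j c_j^2 \approx 2^n\cdot(2^n/n)^2$ dwarfs $t^2=(\mathbb{E}[\vert U_n\vert]/n)^2=\Theta(2^{2n}/n^4)$; plain Azuma gives nothing. Your two suggested fixes do not repair this: the ``multiply by $\lambda_n$'' heuristic is not how martingale differences are bounded (the difference $M_j-M_{j-1}$ is not a.s.~$\le\lambda_n b_n$; an averaged-bounded-differences inequality would need a real argument you have not supplied), and block exposure makes matters worse — with polynomially many blocks each block's flip can move $\vert U_n\vert$ by $\Theta(2^n/\text{poly}(n))$, so $\sum c_j^2$ is again far above $t^2$. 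There is also a side error: in the regime $\chi_n=o(1)$ we have $\pi_k(\chi_n)\approx 2\chi_n(1-e^{-\rho_k n^\delta})=o(1)$, so $1-\pi_k(\chi_n)\to 1$; it is \emph{not} $O(e^{-\rho_k n^\delta})$, and there is no ``exponential slack'' to trade against $\vert\text{\sf B}(v,O(u_nn))\vert$.

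The paper avoids all of this by a Chebyshev/second-moment computation: write $\vert U_n\vert=\sum_v X_v$ with $X_v=\mathbf{1}[v\in U_n]$, and bound $\sum_{v,v'}\mathbb{E}[X_vX_{v'}]$ by splitting pairs into (i) $v,v'$ in the same small component, contributing $\le c_k(u_nn)\varphi_n^k\,\mathbb{E}[\vert U_n\vert]$; (ii) $v,v'$ in distinct components at $Q_2^n$-distance $2$, contributing $\le n\bigl(2c_k(u_nn)\varphi_n^k+1\bigr)^3\vert\Gamma_n\vert$; and (iii) uncorrelated pairs, contributing $\le\mathbb{E}[\vert U_n\vert]^2$. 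Combined with $\mathbb{E}[\vert U_n\vert]\ge c\vert\Gamma_n\vert$ (isolated vertices) this gives $\mathbb{V}[\vert U_n\vert]/\mathbb{E}[\vert U_n\vert]^2=o(1/n^2)$ and Chebyshev finishes. The key point, which your approach misses, is that $X_v=1$ forces $v$'s entire component to be small (hence spatially confined), so \emph{pairwise} correlations are genuinely local even though the global map $\Gamma_n\mapsto\vert U_n\vert$ is not Lipschitz in any single vertex. If you want to salvage a martingale proof you would need a non-uniform / typical-bounded-differences inequality together with a real bound on the probability that $v$ sits in a large component that removing $v$ would shatter into small pieces; as written, the argument does not go through.
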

\begin{proof}
Let $X_v$ be the indicator variable for the event $v\in U_n$, i.e.~$v$ is
contained in a $\Gamma_n$-(sc) of size $<c_k \,(u_nn)\varphi_n^k$.
Clearly $\vert U_n\vert =\sum_{v\in \Gamma_n}X_v$. In order to prove our 
concentration result we proceed by estimating $\mathbb{V}[\vert U_n\vert]$.
Suppose $v\neq v'$. There are two ways by which 
$X_v,X_{v'}$ viewed as r.v.~over $Q_{2,\lambda_n}^n$ can be correlated.
First $v,v'$ can belong to the same component in which case we write 
$v\sim_1 v'$. Clearly,
\begin{equation}
\sum_{v\sim_1 v'}\mathbb{E}[X_v\,X_{v'}] \le c_k \,(u_nn)\varphi_n^k\; \,
\mathbb{E}[\vert U_n\vert]\ .
\end{equation}
Secondly, correlation arises when $v,v'$ belong to two different components 
$C_v$, $C_{v'}$ having minimal distance $2$ in $Q_{2}^n$. In this case we 
write $v\sim_2 v'$. Then there
exists some $Q_2^n$-vertex, $w$, such that
$\{w\}\subset {\sf d}(C_v)\cap {\sf d}(C_{v'})$ and we derive
\begin{eqnarray*}
\mathbb{P}(\text{\rm $C_v$ and $C_{v'}$ are $\Gamma_n$-(sc),
$d(C_v,C_{v'})=2$}) & = & \frac{1-\lambda_n}{\lambda_n}\,
\mathbb{P}(C_v\cup C_{v'}\cup \{w\} \ \text{\rm is a $\Gamma_n$-(sc)}) \\
& \le & n \, \mathbb{P}(C_v\cup C_{v'}\cup
\{w\} \ \text{\rm is a $\Gamma_n$-(sc)}) \ .
\end{eqnarray*}
Since for $v\sim_2v'$  we have 
$\mathbb{P}(\text{\rm $C_v$ and $C_{v'}$ are $\Gamma_n$-(sc),
$d(C_v,C_{v'})=2$})=\mathbb{E}[X_v\,X_{v'}]$ we can 
immediately give the upper bound
\begin{equation}
\sum_{v\sim_2v'}\mathbb{E}[X_v\,X_{v'}]\le n\, (2c_k \,(u_nn)\varphi_n^k+1)^3
\, \vert\Gamma_n\vert \ .
\end{equation}
The uncorrelated pairs $(X_v,X_{v'})$, writing $v\not\sim v'$, can easily
be estimated by
\begin{equation}
\sum_{v\not\sim v'}\mathbb{E}[X_v\,X_{v'}]=
\sum_{v\not\sim v'}\mathbb{E}[X_v]\mathbb{E}[X_{v'}]\le \mathbb{E}[\vert U_n\vert]^2 \ .
\end{equation}
Since $\mathbb{V}[\vert U_n\vert]=\mathbb{E}[\vert U_n\vert (\vert U_n\vert
-1)]+\mathbb{E}[\vert U_n\vert]-\mathbb{E}[\vert U_n\vert]^2$ we have
\begin{eqnarray*}
\mathbb{E}[\vert U_n\vert(\vert U_n\vert -1)] & = & 
\sum_{v\sim_1 v'}\mathbb{E}[X_v\,X_{v'}]+
\sum_{v\sim_2 v'}\mathbb{E}[X_v\,X_{v'}] +
\sum_{v\not\sim v'}\mathbb{E}[X_v\,X_{v'}] \\
& \le & c_k \,(u_nn)\varphi_n^k\; \mathbb{E}[\vert U_n\vert] +
n\, (2c_k \,(u_nn)\varphi_n^k+1)^3 \vert\Gamma_n\vert + \mathbb{E}[\vert U_n\vert]^2 \ .
\end{eqnarray*}
Considering isolated vertices in $\Gamma_n$ immediately implies
$\mathbb{E}[\vert U_n\vert]\ge c \vert\Gamma_n\vert$ for some $c>0$, whence
$$
\mathbb{V}[\vert U_n\vert]/\mathbb{E}[\vert U_n\vert]^2 \le \frac{
(1+c_k \,(u_nn)\varphi_n^k)+ 
n\, (2c_k \,(u_nn)\varphi_n^k+1)^3}{c^2\,\vert\Gamma_n\vert}=o(\frac{1}{n^2})
\ .
$$
Via Chebyshev's inequality 
$\mathbb{P}(\vert \vert U_n\vert -\mathbb{E}[\vert U_n\vert]\vert\ge
\frac{1}{n}\, \mathbb{E}[\vert U_n\vert] )\le n^2\,
\frac{\mathbb{V}[\vert U_n\vert]}{\mathbb{E}[\vert U_n\vert]^2}$ holds, whence
the lemma. 
\end{proof}

\begin{lemma}\label{L:size}
Let $\lambda_n=\frac{1+\chi_n}{n}$ where $\epsilon\ge \chi_n\ge 
n^{-\frac{1}{3}+\delta}$. Then we have for sufficiently large 
$k\in\mathbb{N}$ 
\begin{equation}
(1-o(1))\,\pi(\chi_n)\,\vert \Gamma_n\vert\le 
\vert \Gamma_{n,k} \vert \le  (1+o(1))\, \pi(\chi_n) \, 
\vert\Gamma_n\vert \ \qquad \text{\it a.s.}
\end{equation}
\end{lemma}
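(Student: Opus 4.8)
The plan is to derive both bounds from a first-moment computation for $|U_n|$ together with the concentration statement of Lemma~\ref{L:mini}. Write $|\Gamma_{n,k}| = |\Gamma_n| - |U_n|$, so it suffices to show $\mathbb{E}[|U_n|] = (1-\pi(\chi_n))(1+o(1))\,\mathbb{E}[|\Gamma_n|]$ together with the a.s.\ equality $|\Gamma_n| \sim \mathbb{E}[|\Gamma_n|] = \lambda_n 2^n$ (this last is an immediate Chernoff estimate via eq.~(\ref{E:cher}), since $\mathbb{E}[|\Gamma_n|] = \lambda_n 2^n = \frac{1+\chi_n}{n}2^n \to \infty$). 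For the upper bound on $\mathbb{E}[|U_n|]$: a vertex $v$ fails to lie in $\Gamma_{n,k}$ only if it is \emph{not} in $\Gamma_n$, or it is in $\Gamma_n$ but lies in a (sc) of size $< c_k(u_nn)\varphi_n^k$. By Lemma~\ref{L:poly}, conditioned on $v\in\Gamma_n$, the probability of the latter is at most $1 - \pi_k(\chi_n) = 1 - \pi(\chi_n)(1-e^{-\rho_k\varphi_n})$. Since $\varphi_n \ge c_0 n^{\delta}\to\infty$ (as noted in the text following Lemma~\ref{L:poly}), $e^{-\rho_k\varphi_n} = o(1)$, so this probability is $\le 1-\pi(\chi_n) + o(1)$. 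Hence
\begin{equation*}
\mathbb{E}[|U_n|] \le \big(1-\lambda_n + \lambda_n(1-\pi(\chi_n)+o(1))\big)2^n = \big(1 - \lambda_n\pi(\chi_n) + o(\lambda_n)\big)2^n.
\end{equation*}
Since $|\Gamma_n| \sim \lambda_n 2^n = \mathbb{E}[|\Gamma_n|]$ a.s., this gives $|\Gamma_{n,k}| = |\Gamma_n| - |U_n| \ge \lambda_n\pi(\chi_n)2^n - o(\lambda_n 2^n) = (1-o(1))\pi(\chi_n)|\Gamma_n|$ a.s. once $|U_n|$ is shown concentrated.

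For the matching lower bound on $|\Gamma_{n,k}|$ we must control $\mathbb{E}[|U_n|]$ from below, equivalently produce \emph{enough} small components. The simplest route is to observe that $\mathbb{E}[|U_n|] \ge (1-\lambda_n)2^n$ from isolated vertices, which already gives $\mathbb{E}[|U_n|] \ge (1-\pi(\chi_n))(1+o(1))\mathbb{E}[|\Gamma_n|]$ whenever $\lambda_n \to 0$, since $\pi(\chi_n) = o(1)$ in the regime $\chi_n = o(1)$ — so in that sub-regime the upper and lower estimates on $\mathbb{E}[|U_n|]$ coincide asymptotically and we are done. In the constant case $\chi_n = \epsilon$, however, $\pi(\epsilon) = \alpha(\epsilon)$ is a constant in $(0,1)$ and isolated vertices alone do not suffice; here I would instead argue directly that $\mathbb{E}[|\Gamma_{n,k}|] \le (1+o(1))\pi(\chi_n)|\Gamma_n|$ by a union-type bound: the probability that a fixed $v\in\Gamma_n$ lies in \emph{some} (sc) of size $\ge c_k(u_nn)\varphi_n^k$ — equivalently in a connected $\Gamma_n$-set of that size — is at most the probability that the $\gamma$-process (or more precisely the exploration of $v$'s component) survives to that size, which by the subcriticality/criticality estimate underlying Theorem~\ref{T:galton} is $(1+o(1))\pi(\chi_n)$. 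This needs the observation that a component of size $\ge c_k(u_nn)\varphi_n^k \to \infty$ forces the associated branching process to have reached an arbitrarily large generation, whose survival probability tends to $\lim_{t\to\infty}\mathrm{Prob}(Z_t>0) = \pi(\chi_n)$. Applying Lemma~\ref{L:mini} then transfers the expectation bounds to an a.s.\ statement: $\mathbb{P}(||U_n| - \mathbb{E}[|U_n|]| \ge \frac1n\mathbb{E}[|U_n|]) = o(1)$, so a.s.\ $|U_n| = (1+o(1))\mathbb{E}[|U_n|]$, and combining with $|\Gamma_n| = (1+o(1))\mathbb{E}[|\Gamma_n|]$ a.s.\ yields both inequalities.

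The main obstacle is the \textbf{upper bound on $|\Gamma_{n,k}|$ in the constant-$\chi_n$ regime}: Lemma~\ref{L:poly} only gives a lower bound on the chance of being in a large (sc), so to cap $|\Gamma_{n,k}|$ I must show that the probability of $v$ lying in a (sc) of the prescribed polynomial size does not exceed $\pi(\chi_n)(1+o(1))$. This requires a genuine \emph{upper} coupling of component exploration in $Q_2^n$ by a branching process with offspring distribution dominated by (or close to) the $\mathrm{Bin}(m,\lambda_n)$ of Theorem~\ref{T:galton} — one must argue that, since the explored set stays of size $O((u_nn)\varphi_n^k) = n^{O(1)}$ while $n\to\infty$, the number of already-used coordinates is negligible and the per-vertex expected offspring is $(1+o(1))(1+\chi_n)$, so that after $\omega(1)$ generations the survival probability is $\pi(\chi_n)(1+o(1))$ by Theorem~\ref{T:galton}. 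I expect this domination argument, rather than the variance/concentration bookkeeping (handled by Lemma~\ref{L:mini}) or the elementary Chernoff estimate for $|\Gamma_n|$, to be where the real work lies.
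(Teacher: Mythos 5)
Your plan for the lower bound on $\vert\Gamma_{n,k}\vert$ (Claim $1$ in the paper) is essentially the paper's: bound $\mathbb{E}[\vert U_n\vert]$ from above via Lemma~\ref{L:poly} and transfer to an a.s.~statement via Lemma~\ref{L:mini}. But there are two problems with your treatment of the matching upper bound, and one of them is a genuine gap.

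First, a definitional inconsistency: the paper's $U_n$ is the complement of $\Gamma_{n,k}$ \emph{inside} $\Gamma_n$, so $U_n\subset\Gamma_n$ and $\vert U_n\vert\le\vert\Gamma_n\vert\approx\lambda_n 2^n$. When you write ``$\mathbb{E}[\vert U_n\vert]\ge(1-\lambda_n)2^n$'' you are implicitly using $Q_2^n\setminus\Gamma_{n,k}$, which does not match your own identity $\vert\Gamma_{n,k}\vert=\vert\Gamma_n\vert-\vert U_n\vert$, nor the $U_n$ to which Lemma~\ref{L:mini} applies. More importantly, the isolated-vertices shortcut in the $\chi_n=o(1)$ regime does not work once $U_n$ is defined correctly: to get $\vert\Gamma_{n,k}\vert\le(1+o(1))\pi(\chi_n)\vert\Gamma_n\vert$ with $\pi(\chi_n)=o(1)$ you need $\mathbb{E}[\vert U_n\vert]\ge(1-o(1))\vert\Gamma_n\vert$, i.e.~\emph{almost all} of $\Gamma_n$ must sit in small components. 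Isolated $\Gamma_n$-vertices have probability $\lambda_n(1-\lambda_n)^n\sim\lambda_n e^{-(1+\chi_n)}$, so they account for only an $e^{-1}(1+o(1))$ fraction of $\Gamma_n$ — a constant well short of $1-o(1)$. So the ``simplest route'' fails and the upper-coupling argument is needed in \emph{both} regimes, not only for constant $\chi_n$.

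Second, the upper-coupling step that you flag as ``where the real work lies'' is indeed the crux, and it is what the paper does — but it is carried out via a concrete stochastic domination rather than the informal heuristic you describe. The paper compares the exploration of $v$'s component in $\Gamma_n$ with the root component $C_{v^*}$ in the random induced subtree of the $n$-regular rooted tree $T_n$ (offspring $\mathrm{Bin}(n-1,\lambda_n)$ away from the root), and shows $\mathbb{P}(\vert C_{v^*}\vert\le\ell)\le\mathbb{P}(\vert C_v\vert\le\ell)$ by constructing a spanning tree of $C_v$ in which at most $n-1$ new neighbors are examined per vertex. The survival probability $\pi_0(\chi_n)$ of this tree process is then matched to $\pi(\chi_n)$ and the polynomial-size cutoff is handled by citing \cite{Bollobas:92} (Corollary $6$ and Lemma $22$), giving $\mathbb{P}(\vert C_{v^*}\vert<c_k u_n n\,\varphi_n^k)=(1-\pi_0(\chi_n))+o(e^{-n})$. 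Your proposal gestures at this (``the per-vertex expected offspring is $(1+o(1))(1+\chi_n)$'') but does not supply the domination construction nor the quantitative tail estimate that makes the $o(1)$ uniform, so as written this step is a sketch rather than a proof. If you replace the isolated-vertices argument by the $T_n$-domination argument for both regimes and invoke the Bollob\'as--Kohayakawa--\L uczak estimates explicitly, your outline becomes the paper's proof.
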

In view of Lemma~\ref{L:poly} the crucial part is to show that there are 
sufficiently many $\Gamma_n$-vertices contained in $\Gamma_n$-(sc) 
of size $< c_k\, u_nn\varphi_n^k$. 
For this purpose we use a strategy introduced by Bollob\'as {\it et.al.} 
\cite{Bollobas:92} and consider the $n$-regular rooted tree $T_n$.
Let $v^*$ denote the root of $T_n$. Then $v^*$ has $n$ descendents and
all other $T_n$-vertices have $n-1$
Selecting the $T_n$-vertices with independent probability $\lambda_n$ 
we obtain the probability space $T_{n,\lambda_n}$ whose elements, $A_n$, 
are random induced subtrees. We will be interested in the $A_n$-component 
which contains the root, denoted by $C_{v^*}$.
Let $\xi_{v^*}$ and $\xi_v$, for $v\neq v^*$ be two r.v.~such that 
$\text{\sf Prob}(\xi_{v^*}=\ell)=B_n(\ell,\lambda_n)$ and $\text{\sf Prob}
(\xi_{v}=\ell)=B_{n-1}(\ell,\lambda_n)$, respectively. We assume that
$\xi_{v^*}$ and $\xi_v$ count the offspring produced at $v^*$ and 
$v\neq v^*$. Then the induced branching process initialized at 
$v^*$, $(Z_i)_{i\in \mathbb{N}_0}$ constructs $C_{v^*}$. Let $\pi_0(\chi)$ 
denote its survival probability, then we have in view of Theorem~\ref{T:galton}
and \cite{Bollobas:92}, Corollary $6$:
\begin{equation}\label{E:ll}
\pi_0(\chi_n) = (1+o(1))\,\pi(\chi_n)\ .
\end{equation}
\begin{proof}
{\it Claim $1$.} $\vert \Gamma_{n,k}\vert \ge 
\left((1-o(1))\,\pi(\chi_n)\right)\,\vert \Gamma_n\vert$ a.s.\\
According to Lemma~\ref{L:poly} we have 
$
\mathbb{E}[\vert U_n\vert] < (1-\pi_k(\chi_n))\,\vert \Gamma_n\vert
$
and we can conclude using Lemma~\ref{L:mini} and 
$\mathbb{E}[\vert U_n\vert]= O(\vert\Gamma_n\vert)$
\begin{equation}
\vert U_n\vert < \left(1+O(\frac{1}{n})\right) \,\mathbb{E}[\vert U_n\vert]
<\left(1-(\pi_k(\chi_n)-O(\frac{1}{n}))\right)\vert
\Gamma_n\vert \quad \text{\rm a.s.}
\end{equation}
In view of eq.~(\ref{E:ist}) and $\chi_n\ge n^{-\frac{1}{3}+\delta}$ we have 
for arbitrary but fixed $k$, 
$$
\pi_k(\chi_n)-O(\frac{1}{n})=(1-o(1))\,\pi(\chi_n)\ .
$$
Therefore we derive
\begin{equation}\label{E:jk0}
\vert\Gamma_{n,k}\vert \ge (1-o(1))\,\pi(\chi_n)\,\vert
\Gamma_n\vert \quad \text{\rm a.s.,}
\end{equation}
and Claim $1$ follows.\\
{\it Claim $2$.} For sufficiently large $k$, $\vert \Gamma_{n,k}
\vert \le \left((1+o(1))\,\pi(\chi_n)\right)\,\vert \Gamma_n\vert$ 
a.s.~holds.\\
For any fixed $Q_2^n$-vertex, $v$, we have the inequality
\begin{equation}
\mathbb{P}\left(\vert C_{v^*}\vert \le \ell\right)\le
\mathbb{P}\left(\vert C_{v}\vert \le \ell\right) \ .
\end{equation}
Indeed we can obtain $C_v$ by inductively constructing a spanning tree as 
follows: suppose the set of all $C_v$-vertices at distance $h$ is $M_h^{C_v}$. 
Starting with the smallest $w\in M_h^{C_v}$ ($h\ge 1$) there are at most 
$n-1$ $w$-neighbors contained in $M_{h+1}^{C_v}$ that are not neighbors for
some smaller $w'\in M_h^{C_v}$. Hence for any $w\in M_h^{C_v}$ at most 
$n-1$ vertices have to be examined. 
The $A_n$-component $C_{v^*}$ is generated by the same procedure. Then 
for each $w\in M_h^{C_{v^*}}$ there are exactly $n-1$ neighbors in 
$M_{h+1}^{C_{v^*}}$. 
Since the process adds at each stage less or equally many vertices for
$C_v$ we have by construction $\vert C_v\vert\le \vert C_{v^*}\vert$.
Standard estimates for Binomial coefficients allow to estimate the numbers 
of $T_n$-subtrees containg the root \cite{Bollobas:92}, Corollary $3$. 
Since vertex boundaries in $T_n$ are easily 
obtained we can accordingly compute 
$\mathbb{P}(\vert C_{v^*}\vert =\ell)$. Choosing $k$ sufficiently large 
the estimates in \cite{Bollobas:92}, Lemma $22$, guarantee
\begin{equation}\label{E:bas}
\mathbb{P}\left(\vert C_{v^*}\vert < c_k\, u_nn\, \varphi_n^k\right)=
(1-\pi_0(\chi_n))+o(e^{-n}) \ .
\end{equation}
In view of $\mathbb{P}\left(\vert C_{v^*}\vert \le \ell\right)\le
\mathbb{P}\left(\vert C_{v}\vert \le \ell\right)$ and eq.~(\ref{E:ll}) 
we can conclude from eq.~(\ref{E:bas}) 
\begin{equation}
 (1-(1+o(1))\pi(\chi_n))\,\vert \Gamma_n\vert+o(1)  \le 
\mathbb{E}[\vert U_n\vert] \ .
\end{equation}
According to Lemma~\ref{L:mini} we have
$(1-O(\frac{1}{n}))\,\mathbb{E}[\vert U_n\vert]<
\vert U_n\vert$ a.s.~and therefore
\begin{equation}\label{E:jk1}
(1-(1+o(1)+O(\frac{1}{n}))\,
\pi(\chi_n))\,\vert \Gamma_n\vert \le \vert U_n\vert
\qquad \text{\rm a.s.}
\end{equation}
Eq.~(\ref{E:jk0}) and eq.~(\ref{E:jk1}) imply
\begin{equation}
(1-o(1))\, \pi(\chi_n) \vert \Gamma_n\vert \le \vert \Gamma_{n,k}\vert
\le (1+o(1))\, \pi(\chi_n) \vert \Gamma_n\vert\qquad \text{\rm a.s.,}
\end{equation}
whence the lemma.
\end{proof}

Finally we show that $\Gamma_{n,k}$ is a.s.~$2$-dense 
in $Q_2^n$ with the exception of $2^n\, e^{-\tilde{\Delta}\, n^\delta}$ 
vertices. Accordingly $\Gamma_{n,k}$ is uniformly distributed in $\Gamma_n$.
The lemma will allow us to establish via Lemma~\ref{L:split} the existence of 
many vertex disjoint short paths between certain splits of the 
$\Gamma_{n,k}$-vertices.  
\begin{lemma}\label{L:dense-0}
Let $k\in\mathbb{N}$ and $\lambda_n=\frac{1+\chi_n}{n}$ and
$\epsilon\ge \chi_n\ge  n^{-\frac{1}{3}+\delta}$. Then we have
\begin{eqnarray}\label{E:there-1}
\exists\,\Delta>0;\, \forall \, v\in\mathbb{F}_2^n,\quad 
\mathbb{P}\left(\vert\text{\sf S}(v,2)\cap\Gamma_{n,k}\vert   <  
\frac{1}{2} \left(\frac{k}{2(k+1)}\right)^2 \, n^\delta\right)  
& \le &  e^{-\Delta\, n^\delta}
\end{eqnarray}
Let $D_\delta=\{v\mid \vert \text{\sf S}(v,2)\cap \Gamma_{n,k}\vert <
\frac{1}{2}\left(\frac{k}{2(k+1)}\right)^2n^\delta\}$,
then 
\begin{equation}\label{E:D-0} 
\vert D_\delta\vert  \le  2^n\, e^{-\tilde{\Delta}\, n^\delta}   
\quad \text{\it a.s., where}\quad \Delta>\tilde{\Delta}>0\ .
\end{equation}
\end{lemma}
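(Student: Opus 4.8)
The plan is to first establish the pointwise tail bound~(\ref{E:there-1}) for a fixed vertex $v$, and then to deduce the global bound~(\ref{E:D-0}) on $|D_\delta|$ by a first-moment argument combined with the sharp-concentration machinery already available (the variance computation in the style of Lemma~\ref{L:mini}). By translation invariance of $Q_2^n$ we may take $v=(0,\dots,0)$. The point of~(\ref{E:there-1}) is that $\text{\sf S}(v,2)$ contains a large supply of mutually ``independent-looking'' sites, each of which lies in a large $\Gamma_n$-(sc)---and hence in $\Gamma_{n,k}$---with probability at least $\pi_k(\chi_n)\ge(1-o(1))\pi(\chi_n)\ge c_0 n^{-1/3+\delta}$ by Lemma~\ref{L:poly}. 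The combinatorial input is that among the $\binom{n}{2}$ vertices at distance $2$ from the origin we can select roughly $\big(\tfrac{k}{2(k+1)}\big)^2 n$ of them---using disjoint pairs of coordinate blocks from the labelling~(\ref{E:nu})--(\ref{E:not-seq}), say one index from $B_i$ and one from $B_j$ with $i\neq j$, which gives on the order of $k(k-1)\nu_n^2$ such vertices---lying in ``fresh'' coordinate directions, so that the $\gamma$-processes witnessing their membership in $\Gamma_{n,k}$ can be run in pairwise linearly independent subspaces (exactly as in the proof of Lemma~\ref{L:poly}, using $\langle B_i\rangle\oplus\langle B_j\rangle\oplus H$ disjointness) and are therefore genuinely independent events. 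Writing $Y=|\text{\sf S}(v,2)\cap\Gamma_{n,k}|$, we get $\mathbb{E}[Y]\ge \text{(number of selected sites)}\cdot\pi_k(\chi_n)$, which is $\ge \big(\tfrac{k}{2(k+1)}\big)^2 n\cdot c\,\pi(\chi_n)\gg \big(\tfrac{k}{2(k+1)}\big)^2 n^{\delta}$ since $\pi(\chi_n)$ decays only polynomially like $\chi_n\ge n^{-1/3+\delta}$; applying the Chernoff bound~(\ref{E:cher}) with $\eta=\tfrac12$ to this sum of independent indicators yields $\mathbb{P}(Y<\tfrac12\big(\tfrac{k}{2(k+1)}\big)^2 n^\delta)\le 2e^{-c_\eta\mathbb{E}[Y]}\le e^{-\Delta n^\delta}$ for a suitable $\Delta>0$, which is~(\ref{E:there-1}).

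For~(\ref{E:D-0}), by~(\ref{E:there-1}) and linearity $\mathbb{E}[|D_\delta|]=\sum_{v}\mathbb{P}(v\in D_\delta)\le 2^n e^{-\Delta n^\delta}$. To upgrade this expectation bound to an almost-sure statement with the slightly weaker exponent $\tilde\Delta<\Delta$, I would estimate $\mathbb{V}[|D_\delta|]$ by the same correlation bookkeeping used in Lemma~\ref{L:mini}: the indicator $X_v=\mathbf{1}_{\{v\in D_\delta\}}$ depends essentially on the occupation status of a bounded-radius neighbourhood of $v$ together with the $\gamma$-processes emanating from $\text{\sf S}(v,2)$, which live in polynomially many coordinates; hence $X_v$ and $X_{v'}$ are independent unless $v,v'$ are ``close'' in the relevant correlation-distance, and the number of such close pairs is at most $2^n\cdot\mathrm{poly}(n)$. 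This gives $\mathbb{V}[|D_\delta|]\le 2^n\,\mathrm{poly}(n)\,e^{-\Delta n^\delta}+\mathbb{E}[|D_\delta|]^2\cdot$(small), and Chebyshev's inequality then yields $|D_\delta|\le 2^n e^{-\tilde\Delta n^\delta}$ a.s.\ for any $\tilde\Delta<\Delta$. Finally, since each $v\notin D_\delta$ has $\text{\sf S}(v,2)\cap\Gamma_{n,k}\neq\varnothing$, the subgraph $\Gamma_{n,k}$ is $2$-dense in $Q_2^n$ off the exceptional set $D_\delta$, and by Lemma~\ref{L:size} $|\Gamma_{n,k}|=(1+o(1))\pi(\chi_n)|\Gamma_n|$, so $\Gamma_{n,k}$ is uniformly distributed in $\Gamma_n$ as claimed.

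The main obstacle will be making precise the claim that the membership events ``$w\in\Gamma_{n,k}$'' for the selected $w\in\text{\sf S}(v,2)$ can be realized as \emph{independent} events: a priori the $\gamma$-process certifying $w\in\Gamma_{n,k}$ may wander into coordinates shared with another $w'$. This is handled exactly as in Lemma~\ref{L:poly}---one reserves for each selected $w$ its own block of ``free'' coordinates (the tail coordinates $e_{z_n+1},\dots,e_n$ used by the $\gamma$-process, plus the distinguishing pair of indices in $B_i,B_j$), invokes the direct-sum relations~(\ref{E:linear}) to guarantee the supporting subspaces are pairwise disjoint, and only \emph{then} counts: since membership in $\Gamma_{n,k}$ is a monotone event, a lower bound on the probability via these disjoint witnesses suffices and no delicate union bound over which coordinates get used is needed. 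A secondary technical point is verifying that $1-e^{-(1+\chi_n)u_n/4}$, and hence $\pi_k(\chi_n)$, is genuinely $\Theta(n^{-1/3+\delta})$ rather than smaller---but this is already recorded in the discussion following Lemma~\ref{L:poly} ($\varphi_n\ge c_0 n^\delta$ and $\pi_k(\chi_n)=(1-o(1))\pi(\chi_n)$), so it can simply be quoted.
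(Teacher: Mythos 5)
Your overall plan for the tail bound (\ref{E:there-1}) — pick a polynomial-size set of ``independent'' sites in $\text{\sf S}(v,2)$, lower-bound the expected number landing in $\Gamma_{n,k}$, and apply Chernoff — is indeed the strategy the paper uses. But your proposed choice of distinguishing coordinates is wrong, and this is the entire content of the lemma.

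You suggest taking $w=v+e_s^{(i)}+e_t^{(j)}$ with $i\neq j$, $1\le i,j\le k$, and claim the witnessing subcomponents can be made pairwise disjoint by the direct-sum relations~(\ref{E:linear}). This fails. The subcomponent produced by Lemma~\ref{L:poly} starting at $w$ is built first by a $\gamma$-process in $w+H$ and then by iterated translations along \emph{all} of $B_1,\dots,B_k$, so it lies in the coset $w+\langle B_1\cup\dots\cup B_k\cup H\rangle$. If $w-w'\in\langle B_i\cup B_j\rangle\subset\langle B_1\cup\dots\cup B_k\rangle$ — which is exactly what happens with your choice — then $w$ and $w'$ lie in the \emph{same} coset of $\langle B_1\cup\dots\cup B_k\cup H\rangle$, so the two witnessing subcomponents can collide, the indicator events ``$w\in\Gamma_{n,k}$'' and ``$w'\in\Gamma_{n,k}$'' are correlated, and the Chernoff bound (\ref{E:cher}) does not apply. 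You implicitly conflate the $\gamma$-(sc) of Lemma~\ref{L:G} (which lives only in $w+H$, and for which your pairs are indeed in distinct $H$-cosets) with the full construction of Lemma~\ref{L:poly} (which spreads across all blocks $B_1,\dots,B_k$). This is precisely why the labelling (\ref{E:not-seq}) reserves an extra block $B_{k+1}=\{e_1^{(k+1)},\dots,e_{\iota_n}^{(k+1)}\}$ of $\iota_n$ coordinates that is \emph{never} touched by the Lemma~\ref{L:poly} construction: the paper takes $w\in\text{\sf S}^{(k+1)}(v,2)=\{v+e_i^{(k+1)}+e_j^{(k+1)}\}$, so that $w-w'\in\langle B_{k+1}\rangle\setminus\{0\}$ lies outside $\langle B_1\cup\dots\cup B_k\cup H\rangle$, forcing the witnessing subcomponents into distinct cosets and making the $\binom{\iota_n}{2}$ indicator events independent. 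Your proof would need to be repaired by switching to this reserved block; as written, the independence step has no justification.

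On (\ref{E:D-0}): the variance/Chebyshev computation you sketch is unnecessary and overcomplicates the argument (you'd have to control pairwise correlations between $\mathbf{1}_{\{v\in D_\delta\}}$ and $\mathbf{1}_{\{v'\in D_\delta\}}$, which is not trivial). Once $\mathbb{E}[|D_\delta|]\le 2^n e^{-\Delta n^\delta}$ is in hand, Markov's inequality with threshold $2^n e^{-\tilde\Delta n^\delta}$ for any $\tilde\Delta<\Delta$ gives failure probability $\le e^{-(\Delta-\tilde\Delta)n^\delta}=o(1)$, which is all the lemma claims; that is how the paper concludes. Also, as a minor point, your bookkeeping of the expected count is off: the number of candidate sites is $\binom{\iota_n}{2}\sim\frac{1}{2}\left(\frac{k}{2(k+1)}\right)^2 u_n^2 n^2$, each is in $\Gamma_n$ with probability $\sim 1/n$ and then in $\Gamma_{n,k}$ with probability $\ge\pi_k(\chi_n)$, which gives $\mathbb{E}[Z]\gtrsim\left(\frac{k}{2(k+1)}\right)^2\frac{u_n^2}{2}\,n\,\pi(\chi_n)\ge c\,n^\delta$ — not the ``$\left(\frac{k}{2(k+1)}\right)^2 n\cdot\pi(\chi_n)$'' you wrote, though the final order of magnitude $n^\delta$ is the same.
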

\begin{proof}
To prove the lemma we use the last (eq.~(\ref{E:not-seq}))
$\iota_n=\lfloor \frac{k}{2(k+1)}u_nn\rfloor$ elements 
$e_{1}^{(k+1)},\dots, e_{\iota_n}^{(k+1)}$. We consider 
for arbitrary $v\in Q_2^n$
\begin{equation}
\text{\sf S}^{(k+1)}(v,2)=
\{ v + e_i^{(k+1)}+  e_j^{(k+1)}\mid 1\le i<j \le \iota_n, \,\} \ .
\end{equation}
Clearly, $\vert \text{\sf S}^{(k+1)}(v,2)\vert =\binom{\iota_n}{2}$ 
holds. By construction the $\Gamma_n$-(sc) of size 
$\ge c_k \, (u_n n)\varphi_n^k$ of Lemma~\ref{L:poly} are vertex 
disjoint for any two vertices in 
$\text{\sf S}^{(k+1)}(v,2)\cap \Gamma_n$ and each $\Gamma_n$-vertex 
belongs to $\Gamma_{n,k}$ with probability $\ge \pi_k(\chi_n)$. 
Let $Z$ be the r.v.~counting the number of vertices in 
$\text{\sf S}^{(k+1)}(v,2)\cap
\Gamma_{n,k}$. Then we have $\mathbb{E}[Z]\sim 
\left(\frac{k}{2(k+1)}\right)^2 \,\frac{u_n^2}{2}\, n\, \pi(\chi_n) 
$.
Eq.~(\ref{E:there-1}) follows now from eq.~(\ref{E:cher}), 
$u_n^2n\chi_n\ge n^\delta$ and
$
\mathbb{P}(\vert \text{\sf S}(v,2)\cap \Gamma_{n,k}\vert<\eta ) \, \le \,  
\mathbb{P}(\vert \text{\sf S}^{(k+1)}(v,2)\cap \Gamma_{n,k}\vert<
\eta) 
$.
Let now $D_\delta=\{v\mid \vert \text{\sf S}(v,2)\cap \Gamma_{n,k}\vert <
\frac{1}{2}\left(\frac{k}{2(k+1)}\right)^2n^\delta\}$.
By linearity of expectation 
$\mathbb{E}(\vert D_\delta\vert)\le 2^ne^{-\Delta\,n^\delta}$ holds 
and using Markov's inequality, $\mathbb{P}(X>\,t\mathbb{E}(X))\le 1/t$ 
for $t>0$, we derive that $\vert D_\delta\vert \le 2^ne^{-\tilde{\Delta} 
n^\delta}$ a.s.~for any $0<\tilde{\Delta}<\Delta$. 
\end{proof}


\section{Vertex boundary and paths}

The following Proposition is due to \cite{Babai:85} used for Sidon sets 
in groups in the context of Cayley graphs. In the following $G$ denotes
a finite group and $M$ a finite set acted upon by $G$.
\begin{proposition}\label{P:sidon}
Suppose $G$ act transitively on $M$ and let $A\subset M$, then we have
\begin{equation}\label{E:cool1}
\frac{1}{\vert G\vert}\sum_{g\in G}\vert A\cap gA\vert =
\vert A\vert^2/\vert M\vert \ .
\end{equation}
\end{proposition}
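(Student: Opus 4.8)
The plan is to compute the sum $\sum_{g\in G}\vert A\cap gA\vert$ by switching the order of summation and exploiting transitivity. First I would write $\vert A\cap gA\vert=\sum_{x\in M}\mathbf{1}[x\in A]\,\mathbf{1}[x\in gA]$, and note that $x\in gA$ is equivalent to $g^{-1}x\in A$. Summing over $g$ and interchanging the two sums gives
\begin{equation*}
\sum_{g\in G}\vert A\cap gA\vert=\sum_{x\in A}\;\sum_{g\in G}\mathbf{1}[g^{-1}x\in A]=\sum_{x\in A}\;\sum_{y\in A}\#\{g\in G\mid g^{-1}x=y\}=\sum_{x\in A}\sum_{y\in A}\#\{g\in G\mid gy=x\}.
\end{equation*}

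The next step is the group-theoretic heart of the argument: since $G$ acts transitively on $M$, for any fixed $x,y\in M$ the set $\{g\in G\mid gy=x\}$ is nonempty, and it is a coset of the stabilizer $\mathrm{Stab}(y)=\{g\in G\mid gy=y\}$; hence it has cardinality $\vert\mathrm{Stab}(y)\vert$. By the orbit–stabilizer theorem, transitivity gives $\vert G\vert=\vert M\vert\cdot\vert\mathrm{Stab}(y)\vert$, so $\#\{g\in G\mid gy=x\}=\vert G\vert/\vert M\vert$ independently of $x$ and $y$. Substituting this into the double sum yields $\sum_{g\in G}\vert A\cap gA\vert=\vert A\vert^2\,\vert G\vert/\vert M\vert$, and dividing by $\vert G\vert$ gives eq.~(\ref{E:cool1}).

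The only point requiring a little care — and the step I expect to be the main (minor) obstacle — is justifying that the fiber count $\#\{g\mid gy=x\}$ is the same constant for every pair $(x,y)$; this is where transitivity is essential (without it some fibers would be empty) and where one invokes orbit–stabilizer. Everything else is a routine interchange of finite sums, so no convergence or measurability issues arise. An alternative phrasing avoiding stabilizers: fix $y_0\in M$; by transitivity the map $g\mapsto gy_0$ is a surjection $G\to M$ with all fibers of equal size $\vert G\vert/\vert M\vert$, and one can reduce the inner count to a fiber of this map after a change of variable $g\mapsto gh$ where $hy=y_0$. Either route closes the proof in a few lines.
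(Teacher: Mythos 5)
Your proof is correct, but it takes a genuinely different route from the paper's. The paper argues by induction on $\vert A\vert$: the base case $A=\{x\}$ reduces to $\vert G_x\vert/\vert G\vert=1/\vert M\vert$ via orbit--stabilizer, and the inductive step writes $A=A_0\cup\{x\}$ and expands $\vert A\cap gA\vert$ into four cross-terms, each of which is summed over $g$ and collapses to a multiple of $\vert G_x\vert$, giving $(\vert A_0\vert+1)^2\vert G_x\vert/\vert G\vert$. You instead avoid induction entirely by expanding $\vert A\cap gA\vert$ as a sum of indicators, interchanging the finite sums, and observing that the inner fiber count $\#\{g\in G\mid gy=x\}$ equals $\vert G\vert/\vert M\vert$ uniformly in $(x,y)$ -- which is exactly the same orbit--stabilizer input the paper uses in its base case, just applied once globally rather than rebuilt at each inductive step. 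Your version is arguably cleaner: it isolates the single place transitivity is used, whereas the paper's induction re-derives the same count (in the form of the cross-terms $\sum_g\vert\{gx\}\cap A_0\vert$, etc.) at each step. One small presentational gap in your sketch: when you pass from $\sum_{x\in A}\sum_{g}\mathbf{1}[g^{-1}x\in A]$ to $\sum_{x\in A}\sum_{y\in A}\#\{g\mid g^{-1}x=y\}$ you are implicitly partitioning $G$ by the value of $g^{-1}x$, which is fine, but it would be worth stating explicitly that this partition is exhaustive because $g^{-1}x$ always lands in $M$ and you are restricting to the summands with $g^{-1}x\in A$. Otherwise the argument is complete.
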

\begin{proof}
We prove eq.~(\ref{E:cool1}) by induction on $\vert A\vert$. 
For $A=\{x\}$ we derive $\frac{1}{\vert G\vert}\sum_{gx=x}1 = 
\vert G_x\vert/\vert G\vert$, since
$\vert M\vert=\vert G\vert/\vert G_x\vert$. We next prove the induction 
step. We write $A=A_0\cup \{x\}$ and compute
\begin{eqnarray*}
\frac{1}{\vert G\vert} \sum_g \vert A\cap gA\vert & = & 
\frac{1}{\vert G\vert} \sum_g (\vert A_0\cap gA_0\vert+
\vert \{gx\}\cap A_0\vert + \vert\{x\}\cap gA_0\vert +
\vert \{gx\}\cap\{x\} \vert \\
& = & \frac{1}{\vert G\vert}(
\vert A_0\vert^2 \vert G_x\vert + 
2 \vert A_0\vert \vert G_x\vert +\vert G_x\vert) \\
& = & \frac{1}{\vert G\vert}((\vert A_0\vert+1)^2 \vert G_x\vert) =
\frac{\vert A\vert^2}{\vert M\vert} \ .
\end{eqnarray*}
\end{proof}

Aldous \cite{Aldous:87,Babai:91b} observed how to use 
Proposition~\ref{P:sidon} for deriving a very general lower bound
for vertex boundaries in Cayley graphs:
\begin{lemma}\label{L:bound}
Suppose $G$ acts transitively on $M$ and let $A\subset M$, and let 
$S$ be a generating set of the Cayley graph $\text{\sf Cay}(G,S)$
where $\vert S\vert=n$. 
Then we have
\begin{equation}
\exists \, s\in S;\quad \vert sA\setminus A\vert \ge 
\frac{1}{n}
\vert A\vert (1-\frac{\vert A\vert}{\vert M\vert}) \ .
\end{equation}
\end{lemma}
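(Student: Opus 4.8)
The plan is to average the quantity $\vert sA\setminus A\vert$ over $s\in S$ and show the average is at least $\frac{1}{n}\vert A\vert(1-\vert A\vert/\vert M\vert)$; then some individual $s$ must meet the bound. First I would write $\vert sA\setminus A\vert = \vert sA\vert - \vert sA\cap A\vert = \vert A\vert - \vert sA\cap A\vert$, using that left translation by $s$ (acting through the $G$-action on $M$) is a bijection of $M$ and hence $\vert sA\vert=\vert A\vert$. Summing over all $n$ generators gives
\begin{equation*}
\sum_{s\in S}\vert sA\setminus A\vert = n\,\vert A\vert - \sum_{s\in S}\vert sA\cap A\vert ,
\end{equation*}
so it suffices to bound $\sum_{s\in S}\vert sA\cap A\vert$ from above by $n\,\vert A\vert\cdot\frac{\vert A\vert}{\vert M\vert}$ — equivalently, to show the average of $\vert sA\cap A\vert$ over the generating set is at most $\vert A\vert^2/\vert M\vert$.

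The key step is to relate this average over $S$ to the average over all of $G$, where Proposition~\ref{P:sidon} applies and gives exactly $\frac{1}{\vert G\vert}\sum_{g\in G}\vert A\cap gA\vert = \vert A\vert^2/\vert M\vert$. For the $n$-cube $Q_2^n = \text{\sf Cay}(\mathbb{F}_2^n,\{e_i\})$ the point is that $G=\mathbb{F}_2^n$ is abelian, $\vert G\vert = 2^n = \vert M\vert$, each stabilizer is trivial, and — crucially — the generating set $S=\{e_1,\dots,e_n\}$ is such that the values $\vert e_iA\cap A\vert$ are controlled by the full-group average. Here I would invoke the Sidon-set property alluded to in the reference to \cite{Babai:85}: one considers the set $A$ together with a counting identity for paths, or equivalently notes that since distinct nonzero group elements give translations with $A\cap gA$ of average size $\vert A\vert^2/\vert M\vert$ over the $\vert G\vert-1$ nonzero elements, and the generators are among these, a pigeonhole/averaging argument yields a generator $s=e_i$ with $\vert e_iA\cap A\vert \le \vert A\vert^2/\vert M\vert$. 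More carefully: from Proposition~\ref{P:sidon}, $\sum_{g\neq e}\vert A\cap gA\vert = \vert A\vert^2\frac{\vert G\vert}{\vert M\vert} - \vert A\vert$, and one wants the $n$ generator-terms to have small average; the cleanest route is Aldous's observation that the function $g\mapsto \vert A\cap gA\vert$ attains its minimum over $g\neq e$ at some value $\le$ the overall average $\vert A\vert^2/\vert M\vert$ (since the average over $g\neq e$ is itself $<\vert A\vert^2/\vert M\vert$), and that for a Cayley graph this minimizing $g$ may be taken in $S$ by symmetry of the generating set, or else the bound holds with a generator directly because every $g\neq e$ that occurs can be routed through a generator.

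The main obstacle is the last point: passing from "some $g\in G\setminus\{e\}$ has small $\vert A\cap gA\vert$" to "some generator $s\in S$ does." This is exactly where the structure of $S$ as a Sidon-type generating set of the Cayley graph matters, and it is the content of the Babai–Aldous argument being quoted. I expect to handle it by the telescoping inequality $\vert gA\setminus A\vert \le \sum_{j}\vert s_j A\setminus A\vert$ when $g=s_{\ell}\cdots s_1$ is a word of length $\ell$ in the generators: averaging this over $g$ against the distribution coming from writing the group elements as generator-words, together with Proposition~\ref{P:sidon} for the left-hand side, forces $\max_{s\in S}\vert sA\setminus A\vert$ — and hence some $s\in S$ with $\vert sA\setminus A\vert$ — to be at least $\frac{1}{n}\vert A\vert(1-\vert A\vert/\vert M\vert)$, since the typical word length is $n$ for $Q_2^n$ (each coordinate flipped independently). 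Once this routing inequality is set up, the remaining computation is the routine arithmetic displayed above.
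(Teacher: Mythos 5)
Your final paragraph lands on exactly the paper's argument: use Proposition~\ref{P:sidon} to get $\frac{1}{\vert G\vert}\sum_{g}\vert gA\setminus A\vert = \vert A\vert\bigl(1-\vert A\vert/\vert M\vert\bigr)$, pick a $g$ with $\vert gA\setminus A\vert$ at least this value, write $g=\prod_{j=1}^k s_j$, and use the routing inequality $\vert gA\setminus A\vert\le\sum_{j=1}^k\vert s_jA\setminus A\vert$ together with $k\le n$ to extract a single good generator. So the core idea is the same and the argument does go through. However, the route you take to get there has two detours that are not merely inefficient but actually wrong if relied upon. First, the opening reduction to bounding $\sum_{s\in S}\vert sA\cap A\vert$ by $n\vert A\vert^2/\vert M\vert$ cannot be carried out: Proposition~\ref{P:sidon} controls the average of $\vert A\cap gA\vert$ over all of $G$, which says nothing about its values on the $n$ specific generators, and for a generic $A$ those values can all exceed $\vert A\vert^2/\vert M\vert$. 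Second, the claim that ``the minimizing $g$ may be taken in $S$ by symmetry of the generating set'' is false; there is no such symmetry, and this is precisely why the telescoping step is needed at all. You correctly hedge and offer the telescoping fallback, which is the right move. Finally, the phrase ``the typical word length is $n$'' is not what the argument uses; what matters is that every element of $\mathbb{F}_2^n$ is a word of length at most $n$ in the coordinate generators, so $k\le n$ holds deterministically for the single chosen $g$ — there is no need to average the routing inequality over a distribution on $G$. The paper's proof is the tidier version: one good $g$, one word decomposition, one pigeonhole over its $k\le n$ letters.
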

\begin{proof}
We compute
\begin{equation}
\vert A\vert  =  
\frac{1}{\vert G\vert}\sum_g(\vert gA\setminus A\vert +\vert A\cap gA\vert) 
 =  \frac{1}{\vert G\vert}\sum_g\vert gA\setminus A\vert +\vert 
A\vert \frac{\vert A\vert}{\vert M\vert} 
\end{equation} 
and hence 
$
\vert A\vert (1-\frac{\vert A\vert}{\vert M\vert})=
\frac{1}{\vert G\vert}\sum_g\vert gA\setminus A\vert  
$.
From this we can immediately conclude
$$
\exists\, g\in G;\quad \vert gA\setminus A\vert\ge 
\vert A\vert (1-\frac{\vert A\vert}{\vert M\vert}) \ .
$$
Let $g=\prod_{j=1}^k s_j$. Since each element of $gA\setminus A$ is
contained in at least one set $s_jA\setminus A$ we obtain
$$
\vert gA\setminus A\vert \le \sum_{j=1}^k \vert s_jA\setminus A\vert \ .
$$
Hence there exists some $1\le j\le k$ such that
$\vert s_jA\setminus A\vert\ge 
\frac{1}{k}\vert gA\setminus A\vert$ and the lemma
follows.
\end{proof}

The next lemma proves the existence of many vertex disjoint paths connecting
the boundaries of certain splits of $\Gamma_{n,k}$-vertices. The lemma is
related to a result in \cite{Spencer:03} but much stronger since 
the actual length of these paths is $\le 3$. The shortness of these paths
results from the $2$-density of $\Gamma_{n,k}$ (Lemma~\ref{L:dense-0}) and
is a consequence of our particular construction of small subcomponents in
Lemma~\ref{L:poly}.
\begin{lemma}\label{L:split}
Suppose $\lambda_n=\frac{1+\chi_n}{n}$ where $\chi_n=\epsilon$ or 
$o(1)=\chi_n\ge n^{-\frac{1}{3}+\delta}$
Let $(A,B)$ be a split of the $\Gamma_{n,k}$-vertex set with the properties
\begin{equation}\label{E:prop}
\exists\,0< \sigma_0\le \sigma_1<1;\quad 
\frac{1}{n^2}\, 2^n\le \vert A\vert = \sigma_0 \vert \Gamma_{n,k}\vert  \quad 
\text{\rm and}\quad  
\frac{1}{n^2}\, 2^n\le \vert B \vert = \sigma_1 \vert \Gamma_{n,k}\vert \ .
\end{equation} 
Then there exists some $t>0$ such that a.s.~$\text{\sf d}(A)$ is 
connected to $\text{\sf d}(B)$ in $Q_2^n$ via at least 
\begin{equation}
\frac{t}{n^4}  \, 2^n/\binom{n}{7}
\end{equation}
vertex disjoint (independent) paths of length $\le 3$.
\end{lemma}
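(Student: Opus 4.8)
The plan is to combine the vertex-boundary estimate of Lemma~\ref{L:bound} with the $2$-density property of $\Gamma_{n,k}$ from Lemma~\ref{L:dense-0}, and then extract a large family of vertex-disjoint connecting paths by a greedy removal argument. First I would apply Lemma~\ref{L:bound} to the Cayley graph $Q_2^n=\text{\sf Cay}(\mathbb{F}_2^n,\{e_i\})$ with $M=G=\mathbb{F}_2^n$ and $n=|S|$: taking $A$ to be the given split-part we get a coordinate direction $e_i$ with $|e_iA\setminus A|\ge \frac{1}{n}|A|(1-|A|/2^n)$. Since $|A|=\sigma_0|\Gamma_{n,k}|\le 2^n$ and $|A|\ge \frac{1}{n^2}2^n$, this yields $|\text{\sf d}(A)|\ge |e_iA\setminus A|\ge \frac{c}{n^3}2^n$ for some $c>0$ (using $1-|A|/2^n\ge 1-\pi(\chi_n)\ge$ const when $\chi_n=\epsilon$, or splitting off the trivial regime), and the same for $B$. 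So both boundaries are of size $\gtrsim 2^n/n^3$.

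Next, the key mechanism: take any $w\in \text{\sf d}(A)\setminus D_\delta$, i.e. a boundary vertex of $A$ that is \emph{not} exceptional in the sense of Lemma~\ref{L:dense-0}. Then $|\text{\sf S}(w,2)\cap\Gamma_{n,k}|\ge \frac{1}{2}(\frac{k}{2(k+1)})^2 n^\delta$, so $w$ is within distance $2$ of $\Omega(n^\delta)$ vertices of $\Gamma_{n,k}$; since by Lemma~\ref{L:dense-0} $|D_\delta|\le 2^n e^{-\tilde\Delta n^\delta}=o(2^n/n^3)$, a.s.\ most of $\text{\sf d}(A)$ lies outside $D_\delta$. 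I would now set up a bipartite-type counting: count pairs $(w,w')$ with $w$ an $A$-side neighbour, $w'$ a $B$-side neighbour, $d(w,w')\le 2$. Each non-exceptional $w$ reaches $\Omega(n^\delta)$ vertices of $\Gamma_{n,k}$ at distance $\le 2$; I want many of those to be $B$-side neighbours. The cleanest route is to observe that $\Gamma_{n,k}$ is (by Lemma~\ref{L:size}) essentially all of $\Gamma_n$ up to the factor $\pi(\chi_n)$ and is uniformly distributed, so a positive fraction of the $\Gamma_{n,k}$-vertices near $w$ lie in $B$ whenever $|B|$ is a constant fraction of $\Gamma_{n,k}$; in the degenerate case $|B|=\frac{1}{n^2}2^n$ one instead argues directly that a random such near-vertex is in $B$ with probability $\gtrsim 1/n^2$. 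Either way one obtains $\gtrsim 2^n/n^3\cdot n^{\delta}/n^{2}$ (or a comparable polynomial) pairs, i.e.\ at least $t' 2^n/n^{c'}$ ordered triples $(w,x,w')$ with $w\in\text{\sf d}(A)$, $w'\in\text{\sf d}(B)$, $d(w,w')\le 2$, giving paths of length $\le 3$ through $Q_2^n$.

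Finally, to pass from ``many connecting paths'' to ``many \emph{vertex-disjoint} connecting paths'': each path of length $\le 3$ uses at most $2$ interior vertices, and each interior vertex $v$ lies on at most $\binom{n}{1}\binom{n}{1}\le n^2$, more crudely $\le \binom{n}{7}$ (after accounting for the choice of the $\le 3$ edges and their directions), connecting paths of length $\le 3$. Greedily selecting paths and deleting the at most a bounded number of vertices each selected path occupies, we lose a factor $O(\binom{n}{7})$, leaving $\ge \frac{t}{n^4}2^n/\binom{n}{7}$ vertex-disjoint paths, as claimed; the exponents $n^4$ and $\binom{n}{7}$ are deliberately generous to absorb the various polynomial losses from Lemma~\ref{L:bound} and the near-vertex count. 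I expect the main obstacle to be making the counting of near-$B$-neighbours rigorous in the thin regime $|B|\asymp 2^n/n^2$ without circularity, since there ``$\Gamma_{n,k}$ uniformly distributed'' must be used quantitatively; the remedy is to fix the specific coordinate block $e_i^{(k+1)}$ used in Lemma~\ref{L:dense-0}'s construction so that the second step of each length-$\le 3$ path is an \emph{independent} random choice conditionally on the $\Gamma_{n,k}$-structure, making the near-vertices land in $B$ with the expected density and letting a Chernoff bound (eq.~(\ref{E:cher})) give the a.s.\ statement.
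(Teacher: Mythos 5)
Your proposal has a genuine gap at the central step: you apply Lemma~\ref{L:bound} to $A$ itself to obtain $|\text{\sf d}(A)|\gtrsim 2^n/n^3$, and then try to argue that ``a positive fraction of the $\Gamma_{n,k}$-vertices near $w\in\text{\sf d}(A)$ lie in $B$.'' This does not work, because the split $(A,B)$ is \emph{adversarial} (and in the application in Theorem~\ref{T:giant} it is determined by the component structure of $\Gamma_n$ itself), not a random partition of $\Gamma_{n,k}$. A vertex $w\in\text{\sf d}(A)$ is at distance $1$ from $A$, so the $\Omega(n^\delta)$ $\Gamma_{n,k}$-vertices that Lemma~\ref{L:dense-0} places within distance $2$ of $w$ could all lie in $A$; nothing forces any of them into $B$. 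Your proposed ``remedy'' --- using fresh independent randomness in the $e_i^{(k+1)}$-block and a Chernoff bound to make near-vertices ``land in $B$ with the expected density'' --- is circular: the randomness in that block has already been conditioned on when one declares which vertices form $\Gamma_{n,k}$ and hence which splits $(A,B)$ are even under consideration, and a deterministic split conditional on $\Gamma_{n,k}$ has no reason to intersect local neighbourhoods proportionally.

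The paper's proof avoids this by a structural trick you are missing: it applies Lemma~\ref{L:bound} not to $A$ but to $\text{\sf B}(A,2)$. A vertex $\beta_2\in\text{\sf d}(\text{\sf B}(A,2))$ is by definition \emph{not} in $\text{\sf B}(A,2)$, i.e.\ it is at distance $\ge 3$ from \emph{every} vertex of $A$. If in addition $\beta_2\notin D_\delta$ (which Lemma~\ref{L:dense-0} guarantees for all but $2^n e^{-\tilde\Delta n^\delta}$ vertices), then $\beta_2$ is within distance $2$ of some $\Gamma_{n,k}$-vertex, and since $\Gamma_{n,k}=A\dot\cup B$ and $\beta_2$ is far from $A$, that vertex must lie in $B$. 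Thus the $2$-density \emph{forces} $\text{\sf d}(\text{\sf B}(A,2))\cap\text{\sf B}(B,2)$ to be large with no probabilistic hypothesis on the split, and one then extends each $(\alpha_1,\alpha_2,\beta_2)$ by at most one more step into $\text{\sf d}(B)$. The case distinction $|\text{\sf B}(A,2)|\le\frac23 2^n$ versus $>\frac23 2^n$ is needed because Lemma~\ref{L:bound} gives a useful boundary bound for $\text{\sf B}(A,2)$ only when $\text{\sf B}(A,2)$ is not almost everything; in the other case one argues directly via $|\text{\sf B}(A,2)\cap\text{\sf B}(B,2)|>\frac13 2^n$. Your remaining ingredients (the two auxiliary lemmas, and obtaining disjointness by taking endpoints at pairwise distance $>6$ so the paths live in disjoint radius-$3$ balls, losing a $\binom{n}{7}$ factor) are essentially the paper's and would go through once the forcing step is in place.
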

\begin{proof}
We consider $\text{\sf B}(A,2)$ and distinguish the cases
\begin{equation}\label{E:distin}
\vert \text{\sf B}(A,2)\vert\le \frac{2}{3} \, 2^n \quad \text{\rm and}
\quad
\vert\text{\sf B}(A,2)\vert>\frac{2}{3}\, 2^n \ .
\end{equation}
Suppose first $\vert \text{\sf B}(A,2)\vert \le \frac{2}{3} \, 2^n$ holds.
According to Lemma~\ref{L:bound} and eq.~(\ref{E:prop}) we have
\begin{equation}
\exists \,d_1>0;\quad \vert {\sf d}({\sf B}(A,2))\vert 
\ge \frac{d_1}{n^3} \, 2^n  
\end{equation}
and Lemma~\ref{L:dense-0} guarantees that a.s.~all except of at most 
$2^n\, e^{-\tilde{\Delta} n^\delta}$ $Q_2^n$-vertices are within distance 
$2$ to 
some $\Gamma_{n,k}$-vertex. Hence there exist at least $\frac{d}{n^3} 
\, 2^n$ vertices of $\text{\sf d}(\text{\sf B}(A,2))$ (which by definition 
are not contained in $\text{\sf B}(A,2)$) contained in $\text{\sf B}(B,2)$ 
i.e.
\begin{equation}
\vert \text{\sf d}\text{\sf B}(A,2)\cap \text{\sf B}(B,2)\vert\ge 
\frac{d}{n^3} \, 2^n
\quad \text{\rm a.s.}
\end{equation}
For each $\beta_2 \in{\sf d}(\text{\sf B}(A,2))\cap \text{\sf B}(B,2)$ 
there exists a path $(\alpha_1,\alpha_2,\beta_2)$, starting in 
$\text{\sf d}(A)$ with terminus $\beta_2$. In view of 
$\text{\sf B}(B,2)={\sf d}(\text{\sf B}(B,1))\dot\cup 
\text{\sf B}(B,1)$ we distinguish the following cases
\begin{equation}
\vert {\sf d}(\text{\sf B}(A,2))\cap {\sf d}(\text{\sf B}(B,1))
\vert\ge \frac{1}{n^3} d_{2,1} \, 2^n \quad 
\text{\rm and}\quad
\vert {\sf d}(\text{\sf B}(A,2))\cap \text{\sf B}(B,1)
\vert\ge \frac{1}{n^3} \, d_{2,2} \, 2^n \ .
\end{equation}
Suppose we have $\vert {\sf d}(\text{\sf B}(A,2))\cap 
{\sf d}(\text{\sf B}(B,1))\vert\ge \frac{1}{n^3} d_{2,1} \, 2^n $.
For each $\beta_2\in {\sf d}(\text{\sf B}(B,1))$ we select some element 
$\beta_1(\beta_2) \in \text{\sf d}(B)$ and set $B^*\subset \text{\sf d}(B)$ 
to be the set of these endpoints. Clearly at most $n$ elements in 
$\text{\sf B}(B,2)$ can produce the same endpoint, whence
$$
\vert B^*\vert\ge \frac{1}{n^4} d_{2,1} \, 2^n \  .
$$
Let $B_1\subset B^*$ be maximal subject to the condition that for any 
pair of $B_1$-vertices $(\beta_1,\beta_1')$ we have $d(\beta_1,\beta_1')> 
6$. Then we have $\vert B_1\vert \ge \vert B^*\vert/ \binom{n}{7}$ since 
$\vert \text{\sf B}(v,7)\vert=\binom{n}{7}$. 
Any two of the paths from $\text{\sf d}(A)$ to $B_1\subset \text{\sf d}(B)$ 
are of the form $(\alpha_1,\alpha_2,\beta_2,\beta_1)$ and vertex disjoint 
since each of them is contained in $\text{\sf B}(\beta_1,3)$. Therefore 
there are a.s.~at least
\begin{equation}
\frac{1}{n^4} d_{2,1} \, 2^n /\binom{n}{7}
\end{equation}
vertex disjoint paths connecting $\text{\sf d}(A)$ and $\text{\sf d}(B)$. 
Suppose next 
$\vert {\sf d}(\text{\sf B}(A,2))\cap \text{\sf B}(B,1)
\vert\ge \frac{1}{n^3} \, d_{2,2} \, 2^n$. We conclude in complete 
analogy that there exist a.s.~at least 
\begin{equation}
\frac{1}{n^3} d_{2,2} \, 2^n /\binom{n}{5}
\end{equation}
vertex disjoint paths of the form $(\alpha_1,\alpha_2,\beta_2)$ connecting 
$\text{\sf d}(A)$ and $\text{\sf d}(B)$. 
It remains to consider the case $\vert \text{\sf B}(A,2)\vert > 
\frac{2}{3}2^n$. By construction both $A$ and $B$ satisfy 
eq.~(\ref{E:prop}), respectively, whence it suffices to assume that also 
$\vert \text{\sf B}(B,2)\vert > \frac{2}{3}2^n$ holds. 
In this case we have
$$ 
\vert \text{\sf B}(A,2)\cap \text{\sf B}(B,2)\vert > \frac{1}{3}\,2^n
$$
and to each $\alpha_2\in\text{\sf B}(A,2)\cap \text{\sf B}(B,2)$ we select
$\alpha_1\in \text{\sf d}(A)$ and $\beta_1\in \text{\sf d}(B)$. We derive
in analogy to the previous arguments that there exist a.s.~at least
\begin{equation}
\frac{1}{n^2}d_2 \, 2^n /\binom{n}{5}
\end{equation}
pairwise vertex disjoint paths of the form $(\alpha_1,\alpha_2,\beta_1)$
and the proof of the lemma is complete.
\end{proof}


\section{The largest component}

\begin{theorem}\label{T:giant}
Let $Q_{2,\lambda_n}^n$ be the random graph consisting of $Q_2^n$-subgraphs,
$\Gamma_n$, induced by selecting each $Q_2^n$-vertex with independent 
probability $\lambda_n$. Suppose $\lambda_n=\frac{1+\chi_n}{n}$, where 
$\epsilon\ge \chi_n\ge n^{-\frac{1}{3}+\delta}$, $\delta>0$. Then we have
\begin{equation}\label{E:large} 
\lim_{n\to\infty}\mathbb{P}\left(\,  
\vert C_n^{(1)}\vert \sim \pi(\chi_n)\,\frac{1+\chi_n}{n}\,2^n
\ \text{\it and $C_n^{(1)}$ is unique}\, \right)= 1 \ .
\end{equation}
\end{theorem}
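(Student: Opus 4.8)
The plan is to deduce Theorem~\ref{T:giant} from the structural results already established, namely Lemma~\ref{L:size} (which pins down $\vert\Gamma_{n,k}\vert$), Lemma~\ref{L:dense-0} ($2$-density of $\Gamma_{n,k}$ away from a tiny exceptional set $D_\delta$), and Lemma~\ref{L:split} (many short vertex-disjoint paths between the boundaries of any balanced split). The skeleton is the classical ``sprinkling / no-sparse-cut'' argument of Ajtai--Koml\'os--Szemer\'edi adapted to the induced-subgraph setting: first show that almost all $\Gamma_{n,k}$-vertices lie in a single component, then identify the size of that component with $\pi(\chi_n)\lambda_n 2^n$ using Lemma~\ref{L:size}, and finally rule out a second large component.

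\textbf{Step 1: Uniqueness within $\Gamma_{n,k}$.} Suppose, for contradiction, that with non-vanishing probability the vertex set of $\Gamma_{n,k}$ splits into two parts $A$ and $B$, each lying in a union of components of $\Gamma_n$, with $\vert A\vert,\vert B\vert\ge\frac1{n^2}2^n$ (the threshold $\frac1{n^2}2^n$ is harmless: by Lemma~\ref{L:size} and Lemma~\ref{L:poly} each $\Gamma_{n,k}$-vertex sits in a (sc) of superpolynomial—indeed $\ge c_k n^{2/3}n^{k\delta}$—size, so any component meeting $\Gamma_{n,k}$ already exceeds this, and a ``small'' side of total size $<\frac1{n^2}2^n$ would consist of $o(2^n)$ vertices which we absorb into the exceptional count). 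Apply Lemma~\ref{L:split} to the split $(A,B)$: it yields a.s.\ at least $\frac{t}{n^4}2^n/\binom{n}{7}$ vertex-disjoint paths of length $\le 3$ in $Q_2^n$ joining $\text{\sf d}(A)$ to $\text{\sf d}(B)$. Each such path has all its $\le 2$ interior vertices in $\mathbb{F}_2^n$, and the probability that a \emph{fixed} path of length $\le 3$ is entirely present in $\Gamma_n$ is $\ge\lambda_n^2=\Theta(n^{-2})$; since the paths are vertex disjoint, these events are independent, so the expected number of fully-present connecting paths is $\ge\frac{t}{n^4}2^n/\binom{n}{7}\cdot\lambda_n^2\to\infty$ superpolynomially fast. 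By a Chernoff bound (eq.~(\ref{E:cher})) at least one such path survives a.s., forcing $A$ and $B$ to lie in the same $\Gamma_n$-component—contradiction. Hence a.s.\ all but at most $o(2^n)$ (more precisely, $O(2^n e^{-\tilde\Delta n^\delta})+O(\frac1{n^2}2^n)$) of the $\Gamma_{n,k}$-vertices lie in one component $C_n^{(1)}$.

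\textbf{Step 2: Size of $C_n^{(1)}$.} Combining Step~1 with Lemma~\ref{L:size}, a.s.
\[
(1-o(1))\,\pi(\chi_n)\,\vert\Gamma_n\vert\le\vert C_n^{(1)}\vert\le\vert\Gamma_n\vert .
\]
The upper bound must be sharpened to $(1+o(1))\pi(\chi_n)\vert\Gamma_n\vert$: for this use again the tree comparison of Lemma~\ref{L:size}, observing that $\mathbb{P}(v\in C_n^{(1)})\le\mathbb{P}(\vert C_v\vert\ge c_k u_nn\varphi_n^k)\le\mathbb{P}(\vert C_{v^*}\vert\ge c_k u_nn\varphi_n^k)=\pi_0(\chi_n)+o(e^{-n})=(1+o(1))\pi(\chi_n)$, so $\mathbb{E}[\vert C_n^{(1)}\vert]\le(1+o(1))\pi(\chi_n)\vert\Gamma_n\vert$; concentration of $\vert C_n^{(1)}\vert$ follows from the same second-moment computation as in Lemma~\ref{L:mini} (the covariance bounds there do not use that the components are small, only that pairwise-correlated components are joined through a single boundary vertex, which still gives a factor $n$). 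Finally, $\vert\Gamma_n\vert\sim\lambda_n 2^n$ a.s.\ by Chernoff (eq.~(\ref{E:cher})) applied to the sum of $2^n$ independent vertex indicators, giving $\vert C_n^{(1)}\vert\sim\pi(\chi_n)\,\lambda_n\,2^n=\pi(\chi_n)\frac{1+\chi_n}{n}2^n$ as claimed; substituting the definition of $\pi(\chi_n)$ recovers the two cases $\alpha(\epsilon)\frac{1+\epsilon}{n}2^n$ and $2\chi_n\frac{1+\chi_n}{n}2^n$ of the Main Theorem.

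\textbf{The main obstacle} is Step~1, specifically justifying the application of Lemma~\ref{L:split} to an \emph{arbitrary} a.s.-occurring split rather than a deterministic one: the split $(A,B)$ is itself a random object determined by $\Gamma_n$, and Lemma~\ref{L:split} must be invoked uniformly over the (exponentially many) possible balanced splits. The standard fix is a two-round exposure—reveal a $(1-\eta)$-fraction of the randomness to determine the component structure of $\Gamma_{n,k}$ and hence the candidate split, then use the remaining independent $\eta$-fraction of vertex-selections to sprinkle in the connecting paths of Lemma~\ref{L:split}; one takes a union bound over splits, which is beaten by the $e^{-(\text{superpoly})}$ failure probability of the sprinkling. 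Care is also needed that Lemma~\ref{L:split}'s hypothesis requires \emph{both} sides $\ge\frac1{n^2}2^n$, so the argument must be set up so that the ``small side'' genuinely contributes only to the $o(2^n)$ exceptional vertices and is never itself a competing giant component; this is where one uses that $\Gamma_{n,k}$-components are already of polynomial size $\gg\frac1{n^2}2^n$ is \emph{false}—rather, the point is that a second component of size $\ge\frac1{n^2}2^n$ would make $(A,B)$ balanced and trigger the contradiction, while components of size $<\frac1{n^2}2^n$ are too few in aggregate to matter, and together with $D_\delta$ are absorbed into the $o(\vert\Gamma_n\vert)$ error, establishing uniqueness.
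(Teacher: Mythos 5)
Your overall plan matches the paper's: two-round exposure in the spirit of Ajtai--Koml\'os--Szemer\'edi, Lemma~\ref{L:split} to produce short vertex-disjoint paths between the sides of any large split of $\Gamma_{n,k}$, and Lemma~\ref{L:size} to identify the size. You also correctly diagnose, in your ``main obstacle'' paragraph, that the split $(A,B)$ is itself random and that the argument must be run as a union bound after a first-round exposure. However, there are two genuine gaps.

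First, the union bound has no teeth without an explicit bound on the \emph{number} of admissible splits, and this is exactly where Lemma~\ref{L:poly} enters the proof in an essential way that you do not invoke. A priori there are $2^{\vert\Gamma_{n,k}\vert}$ ways to pick $A$, far too many. The paper's point is that each $\Gamma_{n,k}$-vertex lies in a (sc) of size at least $c_k\,n^{2/3}n^{k\delta}$, so $\Gamma_{n,k}$ has at most $2^n/(c_k n^{2/3}n^{k\delta})$ components and hence at most $2^{\left(2^n/(c_k n^{2/3}n^{k\delta})\right)}$ candidate sets $A$; one then \emph{chooses $k$ large} so that this is dominated by the per-split sprinkling failure probability $\left(1-(\chi_n/2n)^4\right)^{t\,2^n/(n^4\binom{n}{7})}$. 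Your writeup asserts the union bound ``is beaten'' without computing either side or noting the role of $k$, so the quantitative heart of the proof is missing. (A smaller slip in the same place: a path of length $\le 3$ from $\text{\sf d}(A)$ to $\text{\sf d}(B)$ has up to $4$ vertices that need to be sprinkled in, not $2$; the endpoints live in the vertex \emph{boundaries}, which are disjoint from $A$ and $B$, so the correct per-path probability is of order $\lambda_n^4$, not $\lambda_n^2$. This does not change the conclusion but should be stated correctly.)

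Second, your Step~2 is both overcomplicated and partly wrong. Once Step~1 shows that a.s.\ no split of $\Gamma_{n,k}$ with both parts $\ge 2^n/n^2$ survives the sprinkling, one immediately gets $\vert C_n^{(1)}\vert\sim\vert\Gamma_{n,k}\vert$: the upper bound is simply that any vertex of $C_n^{(1)}$ belongs to a component of size $\ge c_k\,u_nn\,\varphi_n^k$, so $C_n^{(1)}\subset\Gamma_{n,k}$ and $\vert C_n^{(1)}\vert\le\vert\Gamma_{n,k}\vert$; the lower bound is Step~1. Lemma~\ref{L:size} then gives $\vert C_n^{(1)}\vert\sim\pi(\chi_n)\vert\Gamma_n\vert\sim\pi(\chi_n)\lambda_n 2^n$. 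By contrast, your claim that ``the same second-moment computation as in Lemma~\ref{L:mini}'' gives concentration of $\vert C_n^{(1)}\vert$ is not correct: the variance bound in Lemma~\ref{L:mini} crucially uses that correlated vertices lie in \emph{small} components (size $<c_k u_nn\varphi_n^k$), so $\sum_{v\sim_1 v'}\mathbb{E}[X_vX_{v'}]$ is bounded by $c_k u_nn\varphi_n^k\cdot\mathbb{E}[\vert U_n\vert]$; replaying this with the giant component would give a term of the same order as $\mathbb{E}[\vert C_n^{(1)}\vert]^2$ and the Chebyshev bound becomes vacuous. The paper's route via $\vert\Gamma_{n,k}\vert$ sidesteps this cleanly, and you should use it.

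Finally, your closing sentence about which components exceed $\frac1{n^2}2^n$ trails off in self-acknowledged confusion. The resolution is the one above: the paper does \emph{not} claim all $\Gamma_{n,k}$-components are $\gg 2^n/n^2$; rather, it shows a.s.\ there is no \emph{union} $A$ of $\Gamma_{n,k}$-components with $2^n/n^2\le\vert A\vert\le(1-b)\vert\Gamma_{n,k}\vert$, and the split count is controlled because the number of $\Gamma_{n,k}$-components is at most $2^n/(c_kn^{2/3}n^{k\delta})$.
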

\begin{proof}
{\it Claim.} We have $\vert C_n^{(1)}\vert \sim \vert \Gamma_{n,k}\vert$ a.s.\\
To prove the Claim we use an idea introduced by Ajtai {\it et.al.} 
\cite{Ajtai:82} and select $Q_2^n$-vertices in two rounds. 
First we select $Q_2^n$-vertices with independent probability 
$\frac{1+\chi_n/2}{n}$ and subsequently with $\frac{\chi_n}{2n}$. 
The probability for some vertex not to be chosen
in both randomizations is $(1-\frac{1+\chi_n/2}{n})
(1-\frac{\chi_n/2}{n})=1-\frac{1+\chi_n}{n}+\frac{(1+\chi_n/2)
\chi_n/2}{n^2}\ge 1-\frac{1+\chi_n}{n}$. Hence selecting first 
with probability $\frac{1+\chi_n/2}{n}$ (first round) and then with 
$\frac{\chi_n/2}{n}$ (second round) a vertex is selected with probability 
less than $\frac{1+\chi_n}{n}$ (all preceding lemmas hold for the first
randomization $\frac{1+\chi_n/2}{n}$).
We now select in our first round each $Q_2^n$-vertex with probability 
$\frac{1+\chi_n/2}{n}$. According to Lemma~\ref{L:size}
\begin{equation}
\vert \Gamma_{n,k}\vert\sim \pi(\chi_n)\, \vert\Gamma_n\vert 
\quad \text{\rm a.s.}
\end{equation}
Suppose $\Gamma_{n,k}$ contains a component, $A$, such that
$$
\frac{1}{n^2}\, 2^n \le \vert A\vert \le (1-b)\,
\vert \Gamma_{n,k}\vert, \quad b>0
$$
then there exists a split of $\Gamma_{n,k}$, $(A,B)$ satisfying the assumptions
of Lemma~\ref{L:split} 
(and $\text{\sf d}(A)\cap \text{\sf d}(B)=\varnothing$). We now observe that
Lemma~\ref{L:poly} limits the number of ways these splits can be 
constructed. In view of
\begin{equation}
\lfloor \frac{1}{4}u_nn\rfloor\varphi_n^k 
   \ge c_k \, n^{\frac{2}{3}} n^{k\delta},\quad c_k>0
\end{equation}
each $A$-vertex is contained in a component of size 
at least $c_k \, n^{\frac{2}{3}} n^{k\delta}$.
Therefore there are at most 
\begin{equation}
2^{\left(2^n/(c_k \, n^{\frac{2}{3}} n^{k\delta})\right)}
\end{equation}
ways to choose $A$ in such a split. According to Lemma~\ref{L:split}
there exists $t>0$ such that a.s.~$\text{\sf d}(A)$ is 
connected to $\text{\sf d}(B)$ in $Q_2^n$ via at least 
$\frac{t}{n^4}\, 2^n/\binom{n}{7}$ vertex disjoint paths of 
length $\le 3$.
We now select $Q_2^n$-vertices with probability $\frac{\chi_n/2}{n}$. 
None of the above $\ge \frac{t}{n^4}\, 2^n/\binom{n}{7}$ paths can be selected 
during this process. Since any two paths are vertex disjoint the expected 
number of such splits is less than
\begin{equation}
2^{\left(2^n/(c_k \, n^{\frac{2}{3}} n^{k\delta})\right)}\, 
\left(1-\left(\frac{\chi_n/2}{n}\right)^4\right)^{ \frac{t}{n^4}\, 
2^n/\binom{n}{7}} \sim 
2^{\left(2^n/(c_k \, n^{\frac{2}{3}} n^{k\delta})\right)}\,
e^{-\frac{t\chi_n^4}{2^4n^8}\, 2^n/\binom{n}{7}} \ .
\end{equation}
Hence choosing $k$ sufficiently large, we can conclude that
a.s.~there cannot exist such a split. 
Therefore $\vert C_n^{(1)} \vert \sim \vert \Gamma_{n,k}\vert$, 
a.s.~and the Claim is proved.
According to Lemma~\ref{L:size} we therefore have $\vert C_n^{(1)}\vert 
\sim \pi(\chi_n)\,\vert \Gamma_n\vert$. In particular, for $\chi_n=\epsilon$, 
Theorem~\ref{T:galton} ($0<\alpha(\epsilon)<1$) implies that there 
exists a giant component.
It remains to prove that $C_n^{(1)}$ is unique. 
By construction any large component, $C_n'$, is necessarily contained in 
$\Gamma_{n,k}$. In the proof of the Claim we have shown that a.s.~there 
cannot exist a component $C_n'$ in $\Gamma_n$ with the property 
$\vert C_n'\vert \ge \frac{1}{n^2}\,\vert \Gamma_n\vert$. 
Therefore $C_n^{(1)}$ is unique and the proof of the theorem is complete.
\end{proof}

Theorem~\ref{T:giant-2} below is the analogue of Ajtai {\it et.al.}'s result 
\cite{Ajtai:82} (for random subgraphs of $n$-cubes obtained by 
selecting $Q_2^n$-edges independently).
\begin{theorem}\label{T:giant-2}
Let $Q_{2,\lambda_n}^n$ be the random graph consisting of $Q_2^n$-subgraphs,
$\Gamma_n$, induced by selecting each $Q_2^n$-vertex with independent 
probability $\lambda_n$. Then 
\begin{equation}\label{E:giant-alt}
\lim_{n\to\infty}\mathbb{P}(\Gamma_n \,\text{\it has an unique giant 
component})=
\begin{cases}
1 & \text{\rm for $\lambda_n\ge \frac{1+\epsilon}{n}$ }\\
0 & \text{\rm for $\lambda_n\le \frac{1-\epsilon}{n}$ }.
\end{cases} 
\end{equation}
\end{theorem}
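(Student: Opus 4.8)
\textbf{Proof proposal for Theorem~\ref{T:giant-2}.}

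The plan is to derive the two regimes of eq.~(\ref{E:giant-alt}) from the results already established, splitting the argument according to whether $\lambda_n$ is above or below the critical threshold $\frac1n$. For the supercritical case $\lambda_n\ge\frac{1+\epsilon}{n}$, I would first reduce to $\lambda_n=\frac{1+\epsilon}{n}$ by a standard monotone-coupling argument: if $\lambda_n\ge\lambda_n'$ then $Q_{2,\lambda_n'}^n$ can be coupled as a subgraph of $Q_{2,\lambda_n}^n$, so the existence of a giant component is an increasing event and it suffices to treat $\lambda_n=\frac{1+\epsilon}{n}$. For this value, Theorem~\ref{T:giant} with $\chi_n=\epsilon$ gives a.s.~a unique component $C_n^{(1)}$ with $\vert C_n^{(1)}\vert\sim\alpha(\epsilon)\frac{1+\epsilon}{n}2^n$, and since $\vert\Gamma_n\vert\sim\frac{1+\epsilon}{n}2^n$ a.s.~by eq.~(\ref{E:cher}), we have $\vert C_n^{(1)}\vert\ge\kappa\vert\Gamma_n\vert$ for $\kappa=\alpha(\epsilon)/2$, say, using $0<\alpha(\epsilon)<1$ from Theorem~\ref{T:galton}. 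For general $\lambda_n\ge\frac{1+\epsilon}{n}$ one must check the component remains unique; but uniqueness too is essentially monotone here, or more carefully: the added vertices in the coupling only merge components, so the unique large component of the sparser graph stays inside a single component of the denser graph, and any second large component would already have had to be large (of size $\ge\frac1{n^2}\vert\Gamma_n\vert$) in the sparser graph, contradicting Theorem~\ref{T:giant}. Hence $\lim_{n\to\infty}\mathbb{P}(\Gamma_n\text{ has a unique giant component})=1$ for all $\lambda_n\ge\frac{1+\epsilon}{n}$.

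For the subcritical case $\lambda_n\le\frac{1-\epsilon}{n}$, the claim is that a.s.~\emph{no} giant component exists, which is the ``straightforward argument'' alluded to after the Main Theorem. By the same monotone coupling it suffices to treat $\lambda_n=\frac{1-\epsilon}{n}$. Here I would dominate each $\Gamma_n$-component containing a fixed vertex $v$ by the total progeny of a Galton--Watson process whose offspring law is $\mathrm{Bin}(n,\frac{1-\epsilon}{n})$ — indeed, exploring the component of $v$ in $Q_2^n$ breadth-first, each discovered vertex has at most $n$ neighbours, each present with probability $\lambda_n$, so the BFS exploration tree is stochastically dominated by this branching process. Since the mean offspring is $1-\epsilon<1$, the branching process is subcritical and its total progeny $T$ has exponential tails: $\mathbb{P}(T\ge m)\le e^{-c(\epsilon)m}$ for some $c(\epsilon)>0$ (a standard large-deviation bound for subcritical Galton--Watson total progeny, obtainable e.g.~from the Chernoff bound or from the generating-function/Borel-type estimate). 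Taking $m=m_n=K\log_2(2^n)=Kn$ for $K$ large enough that $c(\epsilon)K>\ln 2$, a union bound over all $2^n$ vertices gives $\mathbb{P}(\exists\,v:\vert C_v\vert\ge Kn)\le 2^n e^{-c(\epsilon)Kn}\to 0$. Hence a.s.~every component has size $O(n)=o(2^n/n)=o(\vert\Gamma_n\vert)$, so a.s.~there is no giant component, i.e.~the probability in eq.~(\ref{E:giant-alt}) is $0$.

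The main obstacle is the uniqueness clause in the supercritical regime when $\lambda_n$ is strictly larger than $\frac{1+\epsilon}{n}$, since Theorem~\ref{T:giant} as stated covers only $\chi_n\le\epsilon$; one must argue that enlarging $\lambda_n$ past $\frac{1+\epsilon}{n}$ cannot create a \emph{second} giant component. I expect this to be handled cleanly by the coupling observation above: adding vertices only coalesces existing components, so the number of components of size $\ge\frac1{n^2}\vert\Gamma_n\vert$ cannot increase beyond what it was at $\lambda_n=\frac{1+\epsilon}{n}$ (where it is a.s.~exactly one), modulo the minor bookkeeping that $\vert\Gamma_n\vert$ itself changes — which is harmless since the relevant comparison is with $2^n$ up to factors of $n$, and the threshold $\frac1{n^2}2^n$ from Lemma~\ref{L:split} has slack. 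Everything else is routine: the monotone coupling, the Chernoff concentration of $\vert\Gamma_n\vert$ around $\lambda_n 2^n$, and the subcritical branching-process domination are all standard.
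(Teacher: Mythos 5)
Your subcritical argument takes a genuinely different route from the paper's. The paper makes a direct first-moment count of connected $\ell$-vertex subsets of $Q_2^n$: it bounds their number by $\frac{1}{\ell}2^n n^{\ell-1}$, multiplies by $\lambda_n^\ell$, and sums over $\ell\ge\kappa' n$. You instead dominate the BFS exploration of each vertex's component by a subcritical Galton--Watson process with $\mathrm{Bin}(n,\lambda_n)$ offspring, invoke the exponential tail of the total progeny, and union-bound over the $2^n$ vertices. Both reach the same conclusion that a.s.~every component is $O(n)$, but yours is cleaner and more robust: it avoids counting connected sets in the $n$-cube entirely. (In fact the paper's $n^{\ell-1}$ is really only a path count; a full count of $\ell$-vertex subtrees rooted at a fixed vertex carries an extra factor growing like $c^\ell$ for some $c>1$, which, multiplied by $(1-\epsilon)^\ell$, need not be summable. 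The domination argument is immune to this, since the exponential tail constant of the subcritical total progeny is uniform.)

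On the supercritical side you rightly flag a gap that the paper glosses over: Theorem~\ref{T:giant} only covers $\chi_n\le\epsilon<\tfrac13$, so $\lambda_n>\frac{1+\epsilon}{n}$ genuinely needs an additional argument, whereas the paper just writes ``We proved the first assertion in Theorem~\ref{T:giant}.'' Your monotone coupling handles existence cleanly. The uniqueness step, however, is not correct as proposed: the claim that ``adding vertices only coalesces existing components, so the number of components of size $\ge\frac1{n^2}|\Gamma_n|$ cannot increase'' is false. A single added vertex can merge several sub-threshold components into one that crosses the threshold, so the count of components above any fixed size is not monotone nonincreasing under sprinkling; the added $\Theta(2^n/n)$ vertices could in principle assemble many new large clusters. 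The correct repair is not a monotonicity argument but to rerun the two-round sprinkling argument of Theorem~\ref{T:giant}'s proof at the denser $\lambda_n$: after the first round the lemmas already give a dense $\Gamma_{n,k}$, and any macroscopic split would have to avoid all the independent short paths supplied by Lemma~\ref{L:split} under the second round, which is a.s.~impossible. That fills the gap without appealing to a monotonicity of the component count that does not hold.
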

\begin{proof}
We proved the first assertion in Theorem~\ref{T:giant}.
It remains to consider the case $\lambda_n=\frac{1-\epsilon}{n}$.\\
{\it Claim.} Suppose $\lambda_n=\frac{1-\epsilon}{n}$, then there
exists $\kappa'>0$ such that $\vert C_n^{(1)}\vert \le \kappa'\,n$ holds.\\
The expected number of components of size $\ell$ is less than
\begin{equation}
\frac{1}{\ell}\, 2^n\, n^{\ell-1}\,\left(\frac{1-\epsilon}{n}\right)^{\ell}=
\frac{1}{\ell\,n} \, 2^n\,(1-\epsilon)^{\ell}
\end{equation}
since there are $2^n$ ways to choose the first element and at most
$n$-vertices to choose from subsequently. This component is counted
$\ell$ times corresponding to all $\ell$ choices for the ``first'' 
vertex. Let $X_{\kappa'\, n}$ be the r.v.~counting the number
of components of size $\ge \kappa'\,n$. Choosing $\kappa'$ such 
that $(1-\epsilon)^{\kappa'}<1/4$ we obtain
\begin{equation}
\mathbb{E}(X_{\kappa'\,n})\le \sum_{\ell\ge \kappa'\, n}
\frac{1}{\ell\,n} \, 2^n\,(1-\epsilon)^{\ell}\le
\frac{1}{n^2}2^n (1-\epsilon)^{\kappa'\, n}\,
\sum_{\ell\ge 0}(1-\epsilon)^\ell<
\frac{1}{n^2}\, \left(\frac{1}{2}\right)^{n}\,
\frac{1}{1-(1-\epsilon)} \ ,
\end{equation}
whence the Claim and the proof of the theorem is complete.
\end{proof}

{\bf Acknowledgments.}
We thank E.Y.~Jin, J.~Qin and L.C.~Zuo for helpful suggestions.
Special thanks to the referees for helpful comments.
This work was supported by the 973 Project, the PCSIRT Project of the
Ministry of Education, the Ministry of Science and Technology, and
the National Science Foundation of China.

\bibliographystyle{plain}

\begin{thebibliography}{10}
\bibitem{Aizenman:87}
M.~Aizenman and D.J Barsky.
\newblock Sharpness of the phase transition in percolation models.
\newblock {\em Commun. Math. Phys.}, 108:489--526, 1987.

\bibitem{Ajtai:82}
M.~Ajtai, J.~Koml\'os, and E.~Szemer\'edi.
\newblock Largest random component of a $k$-cube.
\newblock {\em Combinatorica}, 2:1 -- 7, 1982.

\bibitem{Aldous:87}
D.~Aldous and P.~Diaconis.
\newblock Strong uniform times and finite random walks.
\newblock {\em Adv. in Appl. Math.}, 2:69 -- 97, 1987.

\bibitem{Babai:91b}
L.~Babai.
\newblock Local expansion of vertex transitive graphs and random generation in
  finite groups.
\newblock {\em Proc 23 ACM Symposium on Theory of Computing (ACM New York)},
  1:164 -- 174, 1991.

\bibitem{Babai:85}
L.~Babai and V.T. Sos.
\newblock Sidon sets in groups and induced subgraphs of cayley graphs.
\newblock {\em European J. Combin.}, 1:1 -- 11, 1985.

\bibitem{Bollobas:91}
B.~Bollob\'as, Y.~Kohayakawa, and T.~Luczak.
\newblock On the evolution of random boolean functions.
\newblock {\em Extremal Problems in Finite Sets}, pages 137--156, 1991.

\bibitem{Bollobas:92}
B.~Bollob\'as, Y.~Kohayakawa, and T.~Luczak.
\newblock The evolution of random subgraphs of the cube.
\newblock {\em Random Struct. Alg.}, 3:55--90, 1992.

\bibitem{Spencer:03}
C.~Borgs, J.T. Chayes, H.~Remco, G.~Slade, and J.~Spencer.
\newblock Random subgraphs of finite graphs: {III}. the phase transition for
  the n-cube.

\bibitem{Burtin:77}
J.D. Burtin.
\newblock The probability of connectedness of a random subgraph of an
  $n$-dimensional cube.
\newblock {\em Problems of Infomation Transmission}, 13:147 -- 152, 1977.

\bibitem{Chernoff:52}
H.~Chernoff.
\newblock A measure of the asymptotic efficiency for tests of a hypothesis
  based on the sum of observations.
\newblock {\em Annals of Mathematical Statistics}, 23:493--509, 1952.

\bibitem{Erdoes:79}
P.~Erd\H{o}s and J.~Spencer.
\newblock The evolution of the $n$-cube.
\newblock {\em Comput. Math. Appl.}, 5:33--39, 1979.

\bibitem{Gruener:95a}
W.~Gr{\"u}ner, R.~Giegerich, D.~Strothmann, C.M. Reidys, J.~Weber, Hofacker
  I.L., Stadler P.F., and Schuster P.
\newblock Analysis of rna sequence structure maps by exhaustive enumeration
  {I.} neutral networks.
\newblock {\em Chemical Monthly}, 127:355--374, 1996.

\bibitem{Harper:66b}
L.H. Harper.
\newblock Minimal numberings and isoperimetric problems on cubes.
\newblock {\em Theory of Graphs, International Symposium, Rome}, 1966.

\bibitem{Harris:63}
T.E. Harris.
\newblock {\em The Theory of Branching Processes}.
\newblock Springer, 1963.

\bibitem{Kolchin:86}
V.F. Kolchin.
\newblock {\em Random Mappings}.
\newblock Optimization Software, New York, xiv +206pp, 1986.

\bibitem{Mesh:86}
M.V. Meshikov.
\newblock Coincidence of critical points in percolation problems.
\newblock {\em Soviet Mathematics, Doklady}, 33:856--859, 1986.

\bibitem{Molloy:98}
M.~Molloy and B.~Reed.
\newblock The size of the giant component of a random graph with given degree
  sequence.
\newblock {\em Combin. Probab. Comput.}, 7:295--305, 1998.

\bibitem{Reidys:97a}
C.M. Reidys, P.F. Stadler, and P.K. Schuster.
\newblock Generic properties of combinatory maps and neutral networks of {RNA}
  secondary structures.
\newblock {\em Bull.~Math.~Biol.}, 59(2):339 -- 397, 1997.

\bibitem{Schultes:00}
E.A. Schultes and  D.P. Bartel.
\newblock  One Sequence, Two Ribozymes: Implications for the Emergence of 
           New Ribozyme Folds.
\newblock {\em Science}, 289(5478): 448 - 452, 2000.

\bibitem{Schuster:02}
P.~Schuster.
\newblock A testable genotype-phenotype map: Modeling evolution of {RNA}
  molecules.
\newblock Michael Laessig and Angelo Valeriani, editors, Springer, 2002.

\bibitem{Schuster:94}
P.~Schuster, W.~Fontana, P.F. Stadler, and I.L. Hofacker.
\newblock {From sequences to shapes and back: a case study in RNA secondary
  structures}.
\newblock {\em Proc.Roy.Soc.}, B 255:279--284, 1994.
\end{thebibliography}


\end{document}